\newcommand{\cD}{\mathcal{D}}
\newcommand{\mT}{\mathcal{T}}
\newcommand{\gT}{\textbf{T}}
\newcommand{\cT}{{\widetilde{\textbf{T}}}}
\newcommand{\NN}{{\mathbb N}}
\newcommand{\R}{{\mathbb R}}
\newcommand{\ox}{{\overline{x}}}
\newcommand{\ut}{{\underline{t}}}
\newcommand{\De}{{\Delta}}
\newcommand{\na}{{\nabla}}
\newcommand{\pa}{{\partial}}
\newcommand{\si}{{\sigma}}
\newcommand{\ta}{{\tau}}
\newcommand{\om}{{\omega}}
\newcommand{\Om}{{\Omega}}
\newcommand{\Ga}{{\Gamma}}
\newcommand{\al}{{\alpha}}
\newcommand{\n}{{\underline{n}}}
\newcommand{\ca}{{\widetilde{a}}}
\newcommand{\cp}{{\widetilde{p}}}
\newcommand{\ccu}{{\widetilde{u}}}
\newcommand{\wt}{\widetilde}
\newcommand{\be}{\begin{equation}}
\newcommand{\ee}{\end{equation}}
\newcommand{\ba}{\begin{array}}
\newcommand{\ea}{\end{array}}
\def\vs1{\vspace{1ex}}
\newtheorem{definition}{Definition}[section]
\newtheorem{theorem}{Theorem}[section]
\newtheorem{lemma}[theorem]{Lemma}
\newtheorem{proposition}{Proposition}[section]
\newtheorem{corollary}{Corollary}[section]
\newtheorem{remark}{Remark}[section]
\newtheorem{assumption}{Assumption}[section]
\newtheorem{problem}{Problem}[section]
\numberwithin{equation}{section}
\begin{document}
\title{\bf\normalsize On the reduction of PDE's problems in the half-space, under
the slip boundary condition, to the corresponding problems\\ in the
whole space}
\author{by H.~Beir\~ao da Veiga, F.~Crispo, C.~R.~Grisanti }
\date{}
\maketitle \textit{\phantom{aaaaaaaaaaaaaaaaaaaaaaaaaa}}%

\begin{abstract}
The resolution of a very large class of linear and non-linear,
stationary and evolutive partial differential problems in the
half-space (or similar) under the slip boundary condition is reduced
here to that of the corresponding results for the same problem in
the whole space. The approach is particularly suitable for proving
new results in strong norms. To determine whether this extension is
available, turns out to be a simple exercise. The verification
depends on a few general features of the functional space $\,X\,$
related to the space variables. Hence, we present an approach as
much as possible independent of the particular space $X$. We appeal
to a reflection technique. Hence a crucial assumption is to be in
the presence of flat boundaries (see below).\par%
Instead of stating "general theorems" we rather prefer to illustrate
how to apply our results by considering a couple of interesting
problems. As a main example, we show that the resolution of a class
of problems for the evolution Navier-Stokes equations under a slip
boundary condition can be reduced to that of the corresponding
results for the Cauchy problem. In particular, we show that sharp
vanishing viscosity limit results that hold for the evolution
Navier-Stokes equations in the whole space can be extended to the
boundary value problem in the half-space. We also show some
applications to non-Newtonian fluid problems.

\vspace{.2cm}

{\bf Mathematics Subject Classification} 35Q30, 76D05, 76D09.

\vspace{.2cm}

{\bf Keywords.} Navier slip boundary conditions, Navier-Stokes
equations, inviscid limits, non-Newtonoan fluids.

\end{abstract}

\bibliographystyle{amsplain}

\section{Introduction.}
In the following we consider the slip boundary condition
\begin{equation}
\left\{
\begin{array}{l}
u\cdot \n=\,0,\\
\ut \times \,\n=\,0\,,
\end{array}
\right.
\label{boundary-ho}
\end{equation}
where $\,\n\,$ is the outward unit normal to the boundary $\,\Ga\,$,
$\,\ut =\,\mT\cdot\,\n\,$ is the stress vector, and
\begin{equation*}
\mT =-\pi\,I+\,\frac{\nu}{2} \,(\nabla u+\nabla u^T)
\end{equation*}
is the stress tensor. On flat portions of the boundary
\eqref{boundary-ho} simply reads
\begin{equation}
\left\{
\begin{array}{l}
u\cdot \n=\,0,\\
\om \times \,\n=\,0\,,
\end{array}
\right. \label{bcns}
\end{equation}
where $\,\om=\, \na \times \,u\,$. In the general case they differ
only by lower order terms. In the sequel we appeal to
\eqref{boundary-ho}, since our results are proved in the case of flat boundaries.\par%
The literature on slip boundary conditions is particularly vast. The
boundary conditions \eqref{boundary-ho} were proposed by Navier, see
\cite{Navier}. A first mathematical study is due to Solonnikov and
\v{S}{c}adilov in the pioneering paper \cite{so-sca}. In \cite{bv1},
a quite general and self-contained presentation is given. In these
two references regularity results up to the boundary are considered.
See also \cite{BV-annalen} and \cite{bae-jin}, where the regularity
problem is considered in the half-space. We also refer to
\cite{galdi-layton}, \cite{gr}, and \cite{watanabe}.

\vspace{0.2cm}

As already announced in the abstract, as a main example, we turn the
resolution and properties of the evolution Navier-Stokes equations
\begin{equation}
\left\{
\begin{array}{l}
\partial_t\,u +\,(u\cdot\,\nabla)\,u -\,\nu\,\Delta\,u+\,\nabla\,p=\,0,\\
\na \cdot\,u =\,0\,,\\
u(0)=\,a(x)\,,
\end{array}
\right. \label{1;4}
\end{equation}
under the slip boundary condition into the corresponding results for
the Cauchy problem, all at once. Often, sharp results for the Cauchy
problem are known, but counterparts under boundary conditions are
known only in weaker forms. This is the situation concerning the
convergence of the solutions $\,\ccu^\nu\,$ of the Navier-Stokes
equations under the slip boundary condition to the solution of the
Euler equations, as the viscosity $\,\nu\,$ goes to zero. As an
application we will consider this problem. We show that if the
initial data $\,\ca\,$ is given in a suitable functional space
$\,X\,$ then the solutions $\,\ccu^\nu\,$ to the Navier-Stokes
problem belong to $\,C(\,[0,\,T\,]; X\,)\,$ (T.Kato's persistence
property). Further, as the viscosity $\,\nu\,$ goes to zero,
$\,\ccu^\nu\,$ converges in the strong, and uniformly in time,
$\,C(\,[0,\,T\,]; X\,)\,$ norm to the solution of the Euler
equations under the zero-flux boundary condition. We may also assume
that the initial data $\,\ca^\nu\,$ to the Navier-Stokes problem
depends on the viscosity, and converges in $\,X\,$ to some
$\,\ca\,$. See the Theorem \ref{teoka} below, where
$\,X=\,H^{l,\,2}_\si(\R^3_+)\,$. By using the same ideas one can
also approach stationary problems. Clearly in this case a force term
must be included. See the Theorem \ref{teoapplNN} (stationary
problem, for shear-thickening fluids),
and the Theorem \ref{teopev} (evolution problem for shear-thinning fluids).\par%
It is worth noting that, either in the evolution or in the
stationary case, one can solve several problems besides the
Navier-Stokes equations, or get different properties for the
solutions.\par%
Since we are mainly interested in applications to incompressible
fluids, we assume from the very beginning that spaces $\,X\,$
consist of divergence free vector fields. Further, as a rule, functions
labeled by a tilde are defined in the half-space $\,\R^3_+\,$.\par%
The reflection technique, followed here, applies in the presence of
flat boundaries, for instance cubic domains (like that used in
references \cite{bvcr} and \cite{bvcr2}), 3-D strips, or the
half-space $\, \R^3_+ =\,\{\,x \in \R^3\,:\,x_3
>\,0\,\}\,.$ To fix ideas, we mainly refer to the half space case and set%
$$
\Ga =\,\{\,x \in \R^3\,:\,x_3 =\,0\,\}\,.%
$$
Note that $\Ga$ is called ``boundary", even when we consider
functions defined in the whole space. The unit normal to $\Ga$
(outward, with respect to $\R^3_+ \,$) is denoted by $\n\,.$ Traces
on $\Ga$ ``from above" and ``from below" mean, respectively, from
the $x_3>\,0\,$ side, and from the $x_3< \,0\,$ side.\par%

\section{Functional framework.}
In the following, by ``boundary value problem" we always refer to
\eqref{bcns}. Roughly speaking, our main interest is showing that
results hold for the boundary value problem in the framework of a
given functional space $X(\R^3_+)$, \emph{if they hold for the
corresponding problem in $X(\R^3)$}. Due to the general nature of
this last hypothesis, it would be restrictive to dwell upon the
spaces $\,X\,$. This is not difficult, since the main lines depend
only on some general properties of $X$. Some basic assumptions on
the functional spaces $\,X\,$ are obvious from the context, for
example, elements are locally integrable functions, together with a
certain number of partial derivatives. To fix ideas, we suggest that
the reader thinks of $\,X(\R^3)\,$ as being a Sobolev space
$W^{s,\,p}_{\si}(\R^3)\,$ ($\,\si$ stands for ``divergence free").

\vspace{0.2cm}

Concerning the link between the spaces $\,X(\R^3)\,$ and
$\,X(\R^3_+)\,,$ we assume the following, standard, properties:
Restrictions $\,u_+\,$ to $\R^3_+$ (resp. $\,u_-\,$ to $\R^3_-\,$)
of elements $\,u\,\in \,X(\R^3)$ belong to $X(\R^3_+)\,$ (\, resp.
$X(\R^3_-)\,)\,$. Moreover, if  $\,u\,\in \,X(\R^3)\,,$ the norm of
$\,u\,$ in this space is equivalent to the sum of the norms of the
restrictions $\,u_+\,$ and $\,u_-\,$ in the above corresponding
spaces.  Norms in $\,X(\R^3_+)\,$ and in
$\,X(\R^3_-)\,,$ are defined ``symmetrically".\par%
It is in general false (for spaces used in PDE's) that $u_+ \in
\,X(\R^3_+)\,,$ and $u_- \in \,X(\R^3_-)\,$ implies $\,u\,\in
\,X(\R^3)\,$.
In the sequel we assume the following typical situation.%
\begin{assumption}
The functional spaces $\,X\,$ consist of divergence free vector
fields. Further, in correspondence to a given $\,X(\R^3)\,$ space,
an integer $\,l_0 =\,l_0(X)\,$ exists so that partial derivatives
of elements of $X(\R^3)$ have traces on $\,\Ga\,$, in the usual
sense, if  and only if their order is less or equal to $l_0$.
Moreover, $\,u\,$ belongs to $\,X(\R^3)\,$ if and only if its
restrictions satisfy $\,u_+ \in \,X(\R^3_+)\,,$ $\,u_- \in
\,X(\R^3_-)\,,$ and traces of homonymous derivatives coincide,
from above and from below, up to the order
$\,l_0\,$. %
\label{existlo}
\end{assumption}
Assumption \ref{existlo} avoids some singular cases like Sobolev
spaces $\,W^{s,\,p}\,$, if $\,s -\,\frac1p\,$ is an integer. In this
case, and in similar ones, the theory presented in the sequel needs
some adaptation, not considered here.

\vspace{0.2cm}

For convenience, we often use the symbol $\,X^l\,$ to specify the
largest order of the derivatives that appear in the definition of
$X\,$. Clearly, $\,l_0 \leq \,l\,$. For instance, if $\,X^l=\,H^l
=\,H^{l,\,2}\,$, $\,l$ integer, then $ l_0=\,l -\,1\,.$\par%
\section{Some basic results.}
We set
$$
\ox=\,(x_1,\,x_2,\,-\,x_3\,)
$$
and start this section by introducing the following definition.
\begin{definition}
Let $p$ and $v$ be an arbitrary scalar field and an arbitrary vector
field in $\R^3\,$. We set, for each $x \in \,\R^3\,$,
$(\gT\,p)(x)=\,p(\ox)\,,$ and
$$
(\gT\,v)(x)=\,(\,\gT\,v_1(x),\,\gT\,v_2(x),\,-\,\gT\,v_3(x)\,)=\,
(\,v_1(\ox),\,v_2(\ox),\,-\,v_3(\ox)\,)\,.
$$
We also define $\,(\gT\,v)(x)\,,$ for  $x \in \,\R^3_-\,$, if
$\,v\,$ is defined in $\,\R^3_+\,.$ \label{ttrans}
\end{definition}
\begin{proposition}
One has
\begin{equation}
\left\{
\begin{array}{l}
\na\,(\gT\,p)=\,\gT\,(\na\,p)\,,\\
\na \cdot\,(\gT\,v)=\,\gT\,(\na \cdot\,v\,)\,,\\
\na \times\,(\gT\,v)=\,-\,\gT\,(\na \times\,v\,)\,,\\
\De \,(\gT\,v)=\,\gT\,(\De\,v)\,,\\
(\,(\gT\,v)\cdot\,\na\,)(\gT\,v)=\,\gT\,(\,(v\,\cdot \,\na)\,v\,)\,,\\
\pa_t\,(\gT\,v)=\,\gT\,(\,\pa_t\,v)\,,
\end{array}
\right.\label{ttrans}
\end{equation}
where the terms in the left hand side are taken in $x$ and the
corresponding terms in the right hand side in $\ox$. Note that
$\,\gT^{\,2}=\,I\,$.
\label{tteo}%
\end{proposition}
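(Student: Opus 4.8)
The plan is to verify each of the six identities in (\ref{ttrans}) by direct computation, using only the definition of the reflection operator $\gT$ from Definition \ref{ttrans} together with the chain rule. The key structural fact is that the map $x \mapsto \ox = (x_1, x_2, -x_3)$ is its own inverse and has Jacobian matrix $J = \mathrm{diag}(1,1,-1)$, so that $\pa_{x_i}(f(\ox)) = (\pa_{x_i} f)(\ox)$ for $i = 1,2$ while $\pa_{x_3}(f(\ox)) = -(\pa_{x_3} f)(\ox)$.

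I would organize the computation around this single observation. First I would record the scalar case: for $(\gT p)(x) = p(\ox)$ the chain rule gives $\pa_i (\gT p) = (\pa_i p)(\ox)$ for $i=1,2$ and $\pa_3(\gT p) = -(\pa_3 p)(\ox)$. Comparing with the definition of $\gT$ applied to the vector $\na p$ — which flips the sign of the third component — yields the first identity $\na(\gT p) = \gT(\na p)$ immediately. The same scalar rule, applied componentwise and bookkeeping the extra sign carried by the third component of $\gT v$, gives the divergence identity: the sign flip on $\pa_3$ and the sign flip on $v_3$ cancel, producing $\na\cdot(\gT v) = \gT(\na\cdot v)$ (where on a scalar $\gT$ is just reflection of the argument). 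For the curl, one checks each of the three components of $\na \times (\gT v)$; here the extra minus sign appears because the spatial derivatives and the component signs do \emph{not} uniformly cancel, yielding the overall factor $-1$ in $\na\times(\gT v) = -\gT(\na\times v)$.

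For the remaining three identities I would proceed similarly. The Laplacian identity follows from applying the first-derivative rule twice: each $\pa_i^2$ picks up a sign squared, hence no net sign, so $\De(\gT v) = \gT(\De v)$ componentwise (the third-component sign of $\gT v$ simply rides through). The convective term is checked componentwise: in $((\gT v)\cdot\na)(\gT v)$ the contraction $\sum_j (\gT v)_j \pa_j$ again pairs a component sign of $\gT v$ with a derivative sign, and one verifies that the signs arrange so that the $k$-th component reproduces $\gT$ applied to $(v\cdot\na)v$, with the expected sign on the third slot. Finally, since $\gT$ acts only on spatial variables and commutes trivially with $\pa_t$, the time-derivative identity is immediate. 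The closing remark $\gT^2 = I$ follows because $\overline{\ox} = x$ and squaring the third-component sign gives $+1$.

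The only mildly delicate point — what I would call the main obstacle — is the \textbf{sign bookkeeping} in the curl and convective identities, where the component-flip built into $\gT v$ interacts nontrivially with the derivative-flip from $\pa_3$. Unlike the gradient, divergence, and Laplacian (where the two sign effects either align or square away), the curl genuinely acquires a global minus sign, and one must be careful to evaluate all terms at $\ox$ rather than $x$ on the right-hand side. I would handle this by writing out the three curl components explicitly and tracking each sign, which makes the factor $-1$ transparent and simultaneously confirms that the convective term produces $+\gT$ rather than $-\gT$.
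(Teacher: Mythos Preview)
Your proposal is correct and is exactly the direct chain-rule verification the paper has in mind: the paper itself states only ``The proof is left to the reader,'' so your componentwise computation with the sign bookkeeping on $\pa_3$ and on the third component of $\gT v$ is precisely what is expected.
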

The proof is left to the reader.\par%
\begin{lemma}
Let $v$ be a vector field in $\,\R^3\,$. If $\,\gT\,v=\,v\,$ then
$\,v_3(x_1,\,x_2,\,0)=\,0\,$. If $\,\gT\,v=\,-\,v\,$ then
$\,v_j(x_1,\,x_2,\,0)=\,0\,,$ for $\,j=\,1,\,2.$ In particular, if
$\,\gT\,v=\,v\,$ then $\,(\na\times\,v\,)_j(x_1,\,x_2,\,0)=\,0\,,$ for $j=\,1,\,2\,.$%
\label{prip}
\end{lemma}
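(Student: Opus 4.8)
The plan is to unwind the definition of the operator $\,\gT\,$ componentwise and then restrict the resulting identities to the boundary plane $\,\{x_3=\,0\}\,$, where $\,\ox=\,x\,$. On that plane the sign discrepancies built into $\,\gT\,$ force the relevant components to vanish, so the whole statement reduces to three elementary evaluations.

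First I would treat the case $\,\gT\,v=\,v\,$. Looking at the third component in Definition \ref{ttrans}, one has $\,(\gT\,v)_3(x)=\,-\,v_3(\ox)\,$, so the hypothesis reads $\,-\,v_3(x_1,x_2,-x_3)=\,v_3(x_1,x_2,x_3)\,$. Setting $\,x_3=\,0\,$ gives $\,-\,v_3(x_1,x_2,0)=\,v_3(x_1,x_2,0)\,$, whence $\,v_3(x_1,x_2,0)=\,0\,$. The case $\,\gT\,v=\,-\,v\,$ is entirely symmetric: for $\,j=\,1,2\,$ the $j$-th component carries no sign change, so the hypothesis reads $\,v_j(x_1,x_2,-x_3)=\,-\,v_j(x_1,x_2,x_3)\,$, and evaluating at $\,x_3=\,0\,$ yields $\,v_j(x_1,x_2,0)=\,0\,$.

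For the final assertion I would not argue directly on $\,\na\times\,v\,$ but instead reduce it to the second case. Set $\,w=\,\na\times\,v\,$. By the curl identity in Proposition \ref{tteo} one has $\,\na\times\,(\gT\,v)=\,-\,\gT\,(\na\times\,v\,)=\,-\,\gT\,w\,$. Since $\,\gT\,v=\,v\,$ by hypothesis, the left-hand side equals $\,\na\times\,v=\,w\,$, so $\,\gT\,w=\,-\,w\,$. Applying the already-established second claim to the vector field $\,w\,$ then gives $\,w_j(x_1,x_2,0)=\,0\,$ for $\,j=\,1,2\,$, which is exactly $\,(\na\times\,v\,)_j(x_1,x_2,0)=\,0\,$.

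There is no genuine analytic obstacle here; every step is a one-line evaluation on $\,\{x_3=\,0\}\,$. The only point deserving attention is to invoke the correct sign in the curl rule of Proposition \ref{tteo}: the curl \emph{anticommutes} with $\,\gT\,$, in contrast to the gradient and the Laplacian, and it is precisely this sign that turns an even field $\,v\,$ (satisfying $\,\gT\,v=\,v\,$) into an odd field $\,\na\times\,v\,$ (satisfying $\,\gT\,w=\,-\,w\,$), thereby making the second claim applicable.
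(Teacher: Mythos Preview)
Your proof is correct and follows essentially the same route as the paper's: evaluate the componentwise identities on $\{x_3=0\}$ for the first two claims, and for the curl claim apply the anticommutation relation $\na\times(\gT v)=-\gT(\na\times v)$ from Proposition~\ref{tteo} to deduce $\gT(\na\times v)=-\na\times v$, then invoke the second claim. The only difference is presentational---you spell out the reduction to the second case via $w=\na\times v$ a bit more explicitly than the paper does.
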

In fact, from $\,\gT\,v=\,v\,$ it follows that
$$
(\gT\,v)_3(x_1,\,x_2,\,x_3)=\,v_3(x_1,\,x_2,\,x_3)\,.
$$
On the other hand,
$$
\,(\gT\,v)_3(x_1,\,x_2,\,x_3)=-\,(\gT\,v_3)(x_1,\,x_2,\,x_3)=-\,v_3(\ox)\,.
$$
Consequently, $v_3=\,0\,$ for $\,x_3=\,0\,$. The second statement
follows by a similar argument. The last statement follows from the
relation \eqref{ttrans}$_3$ for $\,\gT\,v=\,v\,$
$$
\na \times\,v=\,-\,\gT\,(\na \times\,v\,)\,.
$$
\begin{corollary}
If $\,v=\,\gT\,v\,$ then $v$ satisfies the boundary conditions \eqref{bcns}.%
\label{quatro}
\end{corollary}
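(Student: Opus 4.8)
The plan is to verify the two scalar conditions in \eqref{bcns} directly from Lemma \ref{prip}. The flat boundary $\Ga=\{x_3=0\}$ has unit normal $\n=(0,0,\pm1)$; since both requirements in \eqref{bcns} are homogeneous in $\n$, the choice of sign is immaterial, and for definiteness I take $\n=(0,0,1)$.

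First I would treat the impermeability condition \eqref{bcns}$_1$. With $\n=(0,0,1)$ one has $v\cdot\n=v_3$, so $v\cdot\n=0$ on $\Ga$ is equivalent to $v_3(x_1,x_2,0)=0$. Under the hypothesis $\gT\,v=v$, this is exactly the first assertion of Lemma \ref{prip}, and hence \eqref{bcns}$_1$ holds.

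Next I would treat the vorticity condition \eqref{bcns}$_2$, with $\om=\na\times v$. A short cross-product evaluation gives $\om\times\n=(\om_1,\om_2,\om_3)\times(0,0,1)=(\om_2,-\om_1,0)$, so $\om\times\n=0$ on $\Ga$ is equivalent to the vanishing of the two tangential components $\om_1=\om_2=0$ there, i.e.\ $(\na\times v)_j(x_1,x_2,0)=0$ for $j=1,2$. Again under $\gT\,v=v$, this is precisely the last assertion of Lemma \ref{prip}, and \eqref{bcns}$_2$ follows.

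There is essentially no genuine obstacle: the corollary is an immediate repackaging of Lemma \ref{prip}, once one recognizes that the two conclusions of that lemma, namely $v_3=0$ and $(\na\times v)_{1,2}=0$ on $\Ga$, are exactly the component form of \eqref{bcns} for the flat boundary $x_3=0$. The only step deserving any care is the identification of $\om\times\n=0$ with the vanishing of the tangential components of $\om$, which rests on the elementary cross-product computation above.
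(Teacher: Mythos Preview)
Your argument is correct and matches the paper's own treatment: the paper states Corollary~\ref{quatro} as an immediate consequence of Lemma~\ref{prip} without further comment, and your proof simply makes explicit the identification of \eqref{bcns} with the conclusions $v_3=0$ and $(\nabla\times v)_{1,2}=0$ on $\Ga$ already established there.
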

\section{An explanatory interlude.}%
Before going to the next section it looks useful to justify it by
appealing to an example. To this end we appeal to the solution to
the evolution Navier-Stokes equations. One has the following result.
\begin{proposition}
If $u$ is a solution to the Cauchy problem \eqref{1;4} with initial
data $a$, then $\gT\,u$ is a solution to the Cauchy problem with initial data $\gT\,a$.\par%
If $\,a \in \,X(\R^3)\,$, $\,a=\,\gT\,a\,,$ and $\,u\,$ is the
unique solution to the Cauchy problem \eqref{1;4}, then $u=\,\gT\,u$
for each
$t \in [\,0,\,T\,]\,$.\par%
Moreover, the restriction of $u(t)\,$ to $\R^3_+$ solves the initial
boundary-value problem in the half space, under the boundary
condition \eqref{bcns}.
\label{ttea}%
\end{proposition}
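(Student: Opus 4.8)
The plan is to dispose of the three assertions one after another, drawing entirely on the commutation identities collected in Proposition \ref{tteo} and on Corollary \ref{quatro}; the whole statement is essentially algebraic, so I expect no serious analytic obstacle.

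For the first assertion I would let $p$ be a pressure associated with the solution $u$ of \eqref{1;4} and simply apply $\gT$ to each line of the system. Because $\gT$ is linear and commutes with the operators entering \eqref{1;4} in the precise sense of Proposition \ref{tteo}, the momentum equation evaluated at $\ox$ turns, term by term, into
\[
\pa_t\,(\gT\,u) +\,(\,(\gT\,u)\cdot\,\na\,)(\gT\,u) -\,\nu\,\De\,(\gT\,u) +\,\na\,(\gT\,p)=\,0
\]
evaluated at $x$, where I use successively $\pa_t\,(\gT\,u)=\,\gT\,(\pa_t\,u)$, the nonlinear identity $(\,(\gT\,u)\cdot\,\na\,)(\gT\,u)=\,\gT\,(\,(u\cdot\,\na)\,u\,)$, $\De\,(\gT\,u)=\,\gT\,(\De\,u)$ and $\na\,(\gT\,p)=\,\gT\,(\na\,p)$. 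Likewise $\na\cdot\,(\gT\,u)=\,\gT\,(\na\cdot\,u)=\,0$ recovers incompressibility and $(\gT\,u)(0)=\,\gT\,(u(0))=\,\gT\,a$ recovers the initial datum. The only thing to verify is that $\gT\,u$ carries the regularity demanded by the notion of solution in force; this is immediate, since $\gT$ merely composes with the reflection $x\mapsto\ox$ and flips the third component, so that it maps $X(\R^3)$ into itself (the norms being defined symmetrically) and preserves the time regularity. Thus $\gT\,u$ solves the Cauchy problem with datum $\gT\,a$.

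For the second assertion I would feed the hypothesis $a=\,\gT\,a$ into the first: then $\gT\,u$ solves \eqref{1;4} with the very same initial datum $\gT\,a=\,a$ as $u$. Invoking the assumed uniqueness of the solution in the class under consideration forces $u=\,\gT\,u$ for every $t\in[\,0,\,T\,]$. This is the one point where a genuine hypothesis is consumed: the argument depends on having a well-posedness (uniqueness) statement for the Cauchy problem in $X(\R^3)$, which is exactly the whole-space input the reduction scheme is meant to exploit.

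For the third assertion, once $u=\,\gT\,u$ is established, Corollary \ref{quatro} guarantees at once that $u(t)$ obeys the slip conditions \eqref{bcns} on $\Ga=\,\{x_3=\,0\}$ for every $t$. Restricting the equations, which hold on all of $\R^3$, to the half-space $\R^3_+$, the restriction of $u(t)$ solves the Navier-Stokes system there, takes the restriction of $a$ as initial value, and satisfies \eqref{bcns} on $\Ga$; this is precisely the initial boundary-value problem in $\R^3_+$. As anticipated, the sole substantive ingredient in the whole proof is the uniqueness appealed to in the second assertion.
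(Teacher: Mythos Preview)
Your proof is correct and follows exactly the route taken in the paper: apply $\gT$ term by term via Proposition \ref{tteo} for the first assertion, invoke uniqueness for the second, and cite Corollary \ref{quatro} for the third. The paper's proof is merely a one-paragraph sketch of precisely these three steps, so your write-up is a faithful (and more explicit) expansion of it.
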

\begin{proof}
The proof of the first part follows by applying the linear operator
$\gT$ to each of the single equations \eqref{1;4}, and by appealing
to
 Proposition \ref{tteo}. The proof of the second assertion follows
from the first part and from the uniqueness of the strong solutions
to the Cauchy problem. The third claim follows by appealing to
Corollary \ref{quatro}.
\end{proof}
The above result essentially shows that if  $\,a \in \,X(\R^3)\,$
satisfies $\,a=\,\gT\,a\,,$ then statements that hold for the Cauchy
problem (like inviscid limit results) also hold in the half-space,
for the boundary value problem \eqref{bcns} with initial data
 $\,\ca=\,a_{|\R^3_+}\,.$ However, in considering directly the boundary value problem, the
initial data $\,\ca\,$ is defined in the half-space. Hence, we have
to study how general $\,\ca \in \,X(\R^3_+)\,$ may be, so that it is
the restriction to $\R^3_+\,$ of some  $\,a \in \,X(\R^3)\,$ for
which $a=\,\gT\, a$. In other words, we must express the implicit
constraint $a=\,\gT\,a$ in terms of explicit assumptions on
$\,\ca\,$. This leads to the following problem.
\begin{problem}
Given an arbitrary $\,\ca\, \in  X(\R^3_+)$, which satisfies the
slip boundary condition, look for necessary and sufficient
conditions so that $\,\ca\,$ is the restriction to $\,\R^3_+ \,$ of
some $\,a \,\in X(\R^3) \,$ for which $\,\gT\,a=\,a\,.$%
\label{melb}
\end{problem}
This is the subject of the next section. For instance, we will see
that, if $\,l_0 <\,3\,,$ the answer is always positive (as, for
instance: for $\,H^3_\si \,$; for $\,W^{3,\,p}_\si \,$ and
arbitrarily large $\,p\,$). On the contrary, $\,H^4_\si\,$ and
$\,W^{4,\,p}_\si\,$ require compatibility conditions.
\section{The compatibility conditions, and ``fitting" on the boundary.}%
The main subject of this section is to prove that the conditions
\eqref{numas} below are the right \emph{compatibility conditions}.%
\begin{assumption}
Let $\,\ca=\,(\ca_1,\,\ca_2,\,\ca_3) \in X^l(\R^3_+)\,.$ If
$\,l_0=\,l_0(X^l) \geq \,3\,,$ then for each \emph{odd} integer $\,k
\in [3,\,l_0\,]\,$, the partial derivatives $\,\pa^k_3
\,\ca_j\,,$ $\,j=\,1,\,2\,,$ vanish on $\,\Ga\,$:%
\begin{equation}
\pa^k_3\,\ca_1(x_1,\,x_2,\,0)=\,\pa^k_3\,\ca_2(x_1,\,x_2,\,0)=\,0\,.%
\label{numas}
\end{equation}
\label{sumo}
\end{assumption}
In the next section we show that these conditions are independent.\par%
We introduce the following convention. If $\,g\,$ is defined in
$\,\R^3\,$ (or in $\,\R^3_+ \cup \,\R^3_-\,$), we say that $\,g\,$
\emph{fits on $\Ga$}, or just \emph{fits}, if the traces on $\Ga$,
from above and from below, coincide. Furthermore, the expression
\emph{fits by zero} means that both traces vanish on $\Ga$. Clearly,
these definitions are meaningful
only when the traces from both sides exist. If not, ``fitting" is not defined.\par%
The following theorem is the main result in this section.
\begin{theorem}
Let $\ca \in \,X^l(\R^3_+)\,$ satisfy the slip boundary conditions.
Then there is an
$$
a \in \,X^l(\R^3)
$$
such that $\,\gT\,a=\,a\,$ in $\,\R^3\,,$ and
$$
a_{|\,\R^3_+}=\,\ca\,,
$$
if and only if $\ca$ satisfies the assumptions \ref{numas}.
\label{tefu}
\end{theorem}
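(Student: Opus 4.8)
The plan is to construct $a$ by the obvious device: set $a=\ca$ on $\R^3_+$ and $a=\gT\ca$ on $\R^3_-$. Then $a_{|\R^3_+}=\ca$, and since $\gT^{\,2}=I$ one checks immediately that $\gT\,a=a$ on all of $\R^3$; thus the construction and the two identities come for free, and the entire content of the theorem is the membership $a\in X^l(\R^3)$. By Assumption \ref{existlo} this membership is equivalent to three facts: $a_+=\ca\in X(\R^3_+)$ (given), $a_-=\gT\ca\in X(\R^3_-)$ (immediate from the symmetric definition of the norms, the reflection $\gT$ being an isometry of $X(\R^3_+)$ onto $X(\R^3_-)$), and the fitting of all derivatives up to order $l_0$. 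So the problem reduces entirely to identifying the fitting conditions and comparing them with the slip condition and with \eqref{numas}.

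Next I would carry out the parity bookkeeping. The components $\ca_1,\ca_2$ are reflected evenly in $x_3$ and $\ca_3$ oddly; the tangential derivatives $\pa_1,\pa_2$ commute with $\gT$ and with the trace and preserve this parity, so the fitting of an arbitrary derivative $\pa_1^{\alpha_1}\pa_2^{\alpha_2}\pa_3^{\alpha_3}a_i$ is governed solely by the parity of $\alpha_3$. An even function fits automatically, an odd one fits exactly when its trace vanishes. Hence $a$ fits up to order $l_0$ if and only if $\pa_3^{k}\ca_j|_{\Ga}=0$ for $j=1,2$ and odd $k\le l_0$, and $\pa_3^{k}\ca_3|_{\Ga}=0$ for even $k\le l_0$. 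The order-$0$ condition on $\ca_3$ and the order-$1$ conditions on $\ca_1,\ca_2$ are, by computing $\na\times\ca$ on $\Ga$, precisely the slip condition \eqref{bcns}; the conditions with $k\ge 3$ odd on $\ca_1,\ca_2$ are exactly \eqref{numas}.

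I expect the only genuine obstacle to be the remaining conditions, namely the vanishing of $\pa_3^{k}\ca_3|_{\Ga}$ for even $k\ge 2$, since these do not appear in \eqref{numas} and so must be derived rather than assumed. Here I would use that $X$ consists of divergence free fields: applying $\pa_3^{\,k-1}$ to $\na\cdot\ca=0$ gives $\pa_3^{k}\ca_3=-\pa_1\pa_3^{\,k-1}\ca_1-\pa_2\pa_3^{\,k-1}\ca_2$, and for even $k$ the order $k-1$ is odd, so the right-hand side on $\Ga$ is a tangential derivative of quantities already known to vanish there (the slip condition when $k=2$, and \eqref{numas} when $k\ge 4$). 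Therefore the even-order conditions on $\ca_3$ are automatic, which is precisely why \eqref{numas} involves only the first two components; the same mechanism shows that for $l_0<3$ no condition beyond slip survives.

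Finally, for necessity, suppose such an $a$ exists. Then $a_1,a_2$ are even and $a_3$ is odd in $x_3$, and $a\in X^l(\R^3)$ forces, through Assumption \ref{existlo}, the matching of all traces up to order $l_0$. Applying this to the odd derivatives $\pa_3^{k}a_j$ ($j=1,2$, $k$ odd) shows their traces $\pa_3^{k}\ca_j|_{\Ga}$ must vanish; for $k\ge 3$ these are exactly \eqref{numas}, which closes the equivalence.
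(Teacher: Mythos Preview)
Your proposal is correct and follows essentially the same route as the paper: define the mirror-extension, reduce via Assumption~\ref{existlo} to fitting of traces up to order $l_0$, use the even/odd parity of the reflected components to see that only pure normal derivatives matter and that fitting reduces to ``fitting by zero'' in the odd cases, then eliminate the even-order conditions on $\ca_3$ from the odd-order conditions on $\ca_1,\ca_2$ via $\na\cdot\ca=0$. The paper organizes these steps as Propositions~\ref{banilias}, \ref{menosba}, and~\ref{tres}, but the logic is identical; your treatment of necessity is in fact more explicit than the paper's.
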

For instance, if $l_0=3$ or $l_0=4$, the assumption
$$
\,\pa^3_3 \,\ca_1 =\,\pa^3_3 \,\ca_2=\,0 \quad \textrm{on} \quad \Ga
\,,
$$
guaranties that the traces, from above and from below, of any
derivative of $\,a\,$ of order less or equal to $\,l_0\,$ ($\,a\,$
defined by \eqref{defas}), fit on $\Ga$.%

\vspace{0.2cm}

In order to prove Theorem \ref{tefu}, we define in $\R^3\,$ the
\emph{mirror-extension} $\,\cT \,\ca\,$ of $\,\ca\,$, defined on
 $\R^3_+\,$, by the equation
\begin{equation}
a=\,\cT \,\ca\,:=\left\{ \begin{array}{ll}%
\ca & \textrm{in $\R^3_+$\,,}\\
\gT \,\ca & \textrm{in $\R^3_-$\,.}%
\end{array} \right.%
\label{difas}
\end{equation}
It is obvious that $a=\,\cT \,\ca\,$ is the unique extension of
$\ca$ for which $\,a=\,\gT\,a\,.$ The point here is whether or not
$\,\ca \in X(\R^3_+)$ implies that $\,a=\,\cT \,\ca\,$, belongs to
$\,X(\R^3)\,$. By assumption \ref{existlo}, this is equivalent to
proving the coincidence of the traces on $\Ga$ ``from above and from
below" of all partial derivatives of $\,a\,$ up to order $\,l_0\,$.
We will show that this is equivalent to the conditions on the traces
(obviously from above)
of the derivatives of $\,\ca\,$ referred to in assumption \ref{sumo}.\par%
For convenience we write \eqref{difas} in the more explicit form
$$
a_j(x_1,\,x_2,\,x_3)=\, \ca_j(x_1,\,x_2,\,x_3) \quad \textrm{in}
\quad \R^3_+\,, \quad \textrm{if\,}  \,j=\,1,\,2,\,3\,;
$$
\begin{equation}
a_j(x_1,\,x_2,\,x_3)= \left\{ \begin{array}{ll}%
\ca_j(x_1,\,x_2,\,-\,x_3) & \textrm{in $\R^3_-$\,, if
$j=\,1,\,2\,,$}\\
-\,\ca_3(x_1,\,x_2,\,-\,x_3) & \textrm{in $\R^3_-$\,, if $j=\,3\,$,}
\end{array} \right.
\label{defas}
\end{equation}
and also  the slip boundary condition \eqref{bcns} in the explicit
form
\begin{equation}
\left\{ \begin{array}{ll}%
\ca_3(x_1,\,x_2,\,0)=\,0\,,\\
\pa_3 \,\ca_j(x_1,\,x_2,\,0)=\,\pa_j \,\ca_3(x_1,\,x_2,\,0)\,,
\qquad \textrm{for} \quad j=\,1,\,2\,.
\end{array} \right.
\label{asstil}
\end{equation}

\vspace{0.2cm}

It is worth noting that if a function fit (respectively, fit by
zero), then its tangential derivative of any order also fit
(respectively, fit by zero). Hence, the ``fitting problem" for a
partial derivative $\,\pa^k_\ta \,\pa^m_3 \,a_j\,$ is reduced to the
same problem for the pure normal derivative $\,\pa^m_3 \,a_j\,$. For
convenience, we put in evidence this result.
\begin{lemma}
If a partial derivative $\,\pa^m_3\,a_j\,$ fits (resp. fits by zero)
on $\,\Ga\,$, then any partial derivative $\,\pa^k_\ta
\,\pa^m_3\,a_j\,$ fits (respectively, fits by zero).
\end{lemma}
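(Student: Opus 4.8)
The plan is to exploit the fact that a tangential derivative $\pa_\ta$ differentiates in a direction parallel to $\Ga$, and therefore commutes with the operation of taking the trace on $\Ga$. Identifying $\Ga=\{x_3=\,0\}$ with $\R^2$ through the coordinates $(x_1,\,x_2)$, I would write $g^{+}$ and $g^{-}$ for the traces on $\Ga$, from above and from below, of a function $g$ defined on $\R^3_+\cup\R^3_-$, whenever these exist. The single identity on which everything rests is
$$
(\pa_\ta\,g)^{\pm}=\,\pa_\ta\,(g^{\pm})\,,
$$
where on the right-hand side $\pa_\ta$ denotes differentiation along $\Ga$, in the $x_1$ or $x_2$ direction. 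This holds because the one-sided limit $x_3\to 0^{\pm}$ that defines the trace may be interchanged with differentiation in a direction leaving the boundary invariant, as soon as the derivatives involved are regular enough for the traces to exist in the usual sense.

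Granting this identity, the conclusion follows at once. Put $g=\,\pa^m_3\,a_j$. If $g$ fits, then $g^{+}=\,g^{-}$ on $\Ga$; applying $\pa_\ta$ to both sides and using the commutation identity gives $(\pa_\ta\,g)^{+}=\,(\pa_\ta\,g)^{-}$, so $\pa_\ta\,g$ fits as well. Iterating over the $k$ tangential derivatives shows that $\pa^k_\ta\,\pa^m_3\,a_j$ fits. If instead $g$ fits by zero, then $g^{+}=\,g^{-}=\,0$ on $\Ga$, and since the tangential derivative of the identically vanishing trace is again zero, the same induction yields $(\pa^k_\ta\,\pa^m_3\,a_j)^{\pm}=\,0$; that is, $\pa^k_\ta\,\pa^m_3\,a_j$ fits by zero.

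The only step that is not purely formal is the commutation identity, and the main obstacle, such as it is, consists in justifying that the trace-defining limit and the tangential differentiation may be exchanged. Under Assumption \ref{existlo} the partial derivatives up to order $l_0$ possess traces in the usual sense, so for the orders $k+m\le\,l_0$ at which fitting is even meaningful the interchange is a routine consequence of the available regularity, and no genuine difficulty is encountered.
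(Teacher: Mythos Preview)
Your argument is correct and matches the paper's approach: the paper does not give a formal proof of this lemma but simply records the observation that tangential derivatives of a function that fits (resp.\ fits by zero) again fit (resp.\ fit by zero), which is exactly the commutation-with-trace idea you spell out. Your write-up is just a more explicit version of the same reasoning.
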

The next result follows easily from the definitions.
\begin{proposition}
Let $\ca$ be given in $\R^3_+$, and let $a=\,\cT \,\ca$ be the
mirror-extension of $\ca$ to $\R^3$. Then:\par%
a) Partial derivatives of $a_3$, of odd order in the normal
direction, fit on $\Ga$. Partial derivatives of $a_j\,,$ for
$\,j=\,1,\,2\,$, of even order in the normal direction, fit on
$\Ga$.\par%
b) Partial derivatives of $a_3$, of even order in the normal
direction, and partial derivatives of $a_j\,,$ for $\,j=\,1,\,2\,$,
of odd order in the normal direction, fit on $\Ga$ if and only if they fit by zero.%
\label{banilias}
\end{proposition}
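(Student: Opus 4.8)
The plan is to reduce the assertion to a parity computation on pure normal derivatives and then read off the two cases from the sign produced by the chain rule under the reflection $x_3\mapsto -x_3$.

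First, by the preceding Lemma it suffices to treat pure normal derivatives $\pa^m_3\,a_j$: tangential differentiation commutes with taking traces from either side and preserves both fitting and fitting by zero, so a general derivative $\pa^k_\ta\,\pa^m_3\,a_j$ inherits the behaviour of $\pa^m_3\,a_j$. Thus I would fix $j$ and $m$ and compute the two one-sided traces of $\pa^m_3\,a_j$ on $\Ga$ directly from the explicit mirror-extension formulae \eqref{defas}.

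Second, in $\R^3_+$ one simply has $\pa^m_3\,a_j=\pa^m_3\,\ca_j$, so the trace from above is $\pa^m_3\,\ca_j(x_1,x_2,0)$. In $\R^3_-$ the components $a_1,a_2$ are the even reflections $\ca_j(x_1,x_2,-x_3)$, while $a_3$ is the odd reflection $-\ca_3(x_1,x_2,-x_3)$; each application of $\pa_3$ to a reflected function brings down a factor $-1$ by the chain rule. Hence in $\R^3_-$
\[
\pa^m_3\,a_j(x)=(-1)^m\,\pa^m_3\,\ca_j(x_1,x_2,-x_3)\quad(j=1,2),\qquad \pa^m_3\,a_3(x)=(-1)^{m+1}\,\pa^m_3\,\ca_3(x_1,x_2,-x_3),
\]
the extra $-1$ in the third component coming from the sign in \eqref{defas}. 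Letting $x_3\to 0^-$ (so $-x_3\to 0^+$), the trace from below equals $(-1)^m$, respectively $(-1)^{m+1}$, times $\pa^m_3\,\ca_j(x_1,x_2,0)$.

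Third, I would compare the two traces. Fitting holds precisely when the relevant sign factor is $+1$; when the sign factor is $-1$ the two traces are negatives of one another, so they coincide if and only if both vanish, that is, the derivative fits if and only if it \emph{fits by zero}. Reading off parities: for $j=1,2$ the sign is $(-1)^m$, giving automatic fitting for even $m$ (part a) and the fit/fit-by-zero dichotomy for odd $m$ (part b); for $j=3$ the sign is $(-1)^{m+1}$, giving automatic fitting for odd $m$ (part a) and the dichotomy for even $m$ (part b). There is essentially no obstacle in this argument; the only point requiring care is correct bookkeeping of the chain-rule sign $(-1)^m$ under the reflection together with the additional sign carried by $a_3$.
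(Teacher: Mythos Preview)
Your proof is correct and is exactly the computation the paper has in mind: the paper merely asserts that the result ``follows easily from the definitions'' and gives no further argument, so you have simply made explicit the parity bookkeeping under $x_3\mapsto -x_3$ together with the extra sign on $a_3$ from \eqref{defas}. The reduction to pure normal derivatives via the preceding Lemma is also the intended first step.
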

 Proposition \ref{banilias} shows that a necessary and sufficient
condition for the resolution of problem \ref{melb} is the fitting by
zero of the partial derivatives considered in part b). However these
conditions are not independent. We start by proving the following
result.
\begin{proposition}
Compatibility conditions are not required for derivatives of order
less than or equal to two. In particular if $\,l_0(X) \leq\,2\,.$
\label{menosba}
\end{proposition}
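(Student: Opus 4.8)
The plan is to combine Proposition~\ref{banilias} with the reduction to pure normal derivatives and then to verify by hand the finitely many relevant traces, using only the explicit slip conditions \eqref{asstil} and the divergence-free constraint. By Assumption~\ref{existlo}, the mirror-extension $a=\cT\,\ca$ belongs to $X(\R^3)$ precisely when every partial derivative up to order $l_0$ fits on $\Ga$; when $l_0(X)\leq 2$ this amounts to checking derivatives of total order at most two. Proposition~\ref{banilias}a) already guarantees that the derivatives of $a_3$ of odd normal order and of $a_j$ ($j=1,2$) of even normal order fit automatically, so nothing must be imposed on them. Part b) reduces the remaining derivatives to the question of whether they \emph{fit by zero}, and the preceding lemma (tangential derivatives of a quantity that fits by zero again fit by zero) lets me restrict attention to pure normal derivatives. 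Thus the whole matter collapses to showing that three specific traces vanish: $a_3$ itself (normal order $0$), $\pa_3\,a_j$ for $j=1,2$ (normal order $1$), and $\pa_3^2\,a_3$ (normal order $2$).

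First I would dispose of the two lowest-order traces, which follow at once from \eqref{asstil}. The trace of $a_3$ from above is $\ca_3(x_1,x_2,0)=0$ by \eqref{asstil}$_1$, so $a_3$ fits by zero. For $\pa_3\,a_j$, $j=1,2$, the trace from above is $\pa_3\,\ca_j(x_1,x_2,0)$, which by \eqref{asstil}$_2$ equals $\pa_j\,\ca_3(x_1,x_2,0)$; since $\ca_3$ vanishes identically on $\Ga$ its tangential derivatives $\pa_j\,\ca_3$ ($j=1,2$) vanish there as well, whence $\pa_3\,\ca_j=0$ on $\Ga$ and $\pa_3\,a_j$ fits by zero.

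The only step requiring a short computation, and the crux of the proposition, is the vanishing of $\pa_3^2\,a_3$, i.e. of $\pa_3^2\,\ca_3$ on $\Ga$. Here I would invoke that $\ca$ is divergence free (Assumption~\ref{existlo}): differentiating $\pa_3\,\ca_3=-(\pa_1\,\ca_1+\pa_2\,\ca_2)$ once in $x_3$ gives $\pa_3^2\,\ca_3=-(\pa_1\,\pa_3\,\ca_1+\pa_2\,\pa_3\,\ca_2)$ throughout $\R^3_+$. Restricting to $\Ga$ and differentiating the boundary identity \eqref{asstil}$_2$ tangentially, which is legitimate because $\pa_1,\pa_2$ are tangential to $\Ga$, yields $\pa_1\,\pa_3\,\ca_1=\pa_1^2\,\ca_3$ and $\pa_2\,\pa_3\,\ca_2=\pa_2^2\,\ca_3$ on $\Ga$. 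Since $\ca_3\equiv 0$ on $\Ga$, its second tangential derivatives vanish there, so $\pa_3^2\,\ca_3=0$ on $\Ga$ and $\pa_3^2\,a_3$ fits by zero.

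Putting these together, every part-b) derivative of order at most two fits by zero, hence fits, while the part-a) derivatives fit automatically; no condition from Assumption~\ref{sumo} is ever invoked (indeed \eqref{numas} first becomes relevant at the odd order $k=3$). Consequently, when $l_0(X)\leq 2$ the extension $a=\cT\,\ca$ lies in $X(\R^3)$ for \emph{every} $\ca$ satisfying the slip condition, with no compatibility condition required. I expect the second-order trace to be the sole delicate point, since it is the only place where the divergence-free structure is genuinely used; the remaining cases are a direct reading of \eqref{asstil}.
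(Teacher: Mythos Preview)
Your argument is correct and follows the same route as the paper: reduce to the three pure normal traces $a_3$, $\pa_3 a_j$ ($j=1,2$), and $\pa_3^2 a_3$, dispatch the first two via \eqref{asstil}, and handle the third by differentiating the divergence-free identity and using that the tangential derivatives $\pa_j\pa_3\ca_j$ vanish on $\Ga$. The paper's proof is terser---it simply notes that the right-hand side of $-\pa_3^2 a_3=\sum_{j=1,2}\pa_j\pa_3 a_j$ has zero trace ``by results already shown'' (namely $\pa_3 a_j=0$ on $\Ga$)---whereas you take the equivalent detour through $\pa_j\pa_3\ca_j=\pa_j^2\ca_3$, but the content is the same.
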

\begin{proof}
 We have to show that
\begin{equation}
a_3(x_1,\,x_2,\,0 )=\,\pa_3\,a_j(x_1,\,x_2,\,0 )=\,
\pa^2_3\,a_3(x_1,\,x_2,\,0 )=\,0\,,\quad \textrm{for} \quad j=\,1,\,2\,.%
\label{asstel}
\end{equation}
The two first assertions follow easily from \eqref{asstil}. On the
other hand, due to the divergence free property, one has
\begin{equation}
-\,\pa^2_3 \,a_3=\,\sum_{j=\,1,\,2} \pa_j \,\pa_3\,a_j \,.%
\label{faltavas}
\end{equation}
By results already shown, both terms on the right hand side of the
above equation have zero trace on $\Ga$.\par%
\end{proof}
\begin{proposition}
Let $\,\ca\,$ be a divergence free vector field defined in
$\,\R^3_+\,$ and let $a=\,\cT \,\ca$ be the mirror-extension of
$\ca$ to $\R^3$. If, for some odd integer $k$,
\begin{equation}
\pa^k_3 \,\ca_j=\,0 \quad \textrm{on}\quad \Ga\,, \quad \textrm{for} \quad j=\,1,\,2\,,%
\label{boh}
\end{equation}
then
\begin{equation}
\,\pa^{k+\,1}_3 \,a_3=\,0\,,%
\label{bah}
\end{equation}
in $\Ga\,$. In other words if $\,\pa^k_3 \,\ca_j \,$ fits by zero,
for $\,j=\,1,\,2\,,$ then $\,\pa^{k+\,1}_3 \,a_3\,$ fits by zero.%
\label{tres}
\end{proposition}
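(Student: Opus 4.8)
The plan is to use the divergence-free constraint to trade the vanishing of the two tangential components' normal derivatives $\pa^k_3\,\ca_j$ for the vanishing of the single normal derivative $\pa^{k+1}_3\,\ca_3$ one order higher, and then to upgrade the resulting one-sided trace to ``fitting by zero'' by exploiting the parity of the mirror-extension.

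First I would solve the divergence-free condition $\na\cdot\ca=0$, valid in $\R^3_+$, for the normal derivative,
\[
\pa_3\,\ca_3=-\,\pa_1\,\ca_1-\,\pa_2\,\ca_2\,,
\]
and differentiate it $k$ times in the normal direction to obtain
\[
\pa^{k+1}_3\,\ca_3=-\,\pa_1\,(\pa^k_3\,\ca_1)-\,\pa_2\,(\pa^k_3\,\ca_2)\qquad\textrm{in }\R^3_+\,,
\]
which is the order-$(k+1)$ analogue of \eqref{faltavas}. The decisive feature is that every term on the right is a \emph{tangential} derivative of one of the functions $\pa^k_3\,\ca_j$.

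Next I would pass to traces on $\Ga$. By hypothesis \eqref{boh}, $\pa^k_3\,\ca_j$ vanishes identically on $\Ga$ for $j=1,2$; since a function vanishing on $\Ga$ has vanishing tangential derivatives there (cf. the tangential-derivative lemma stated just above), the traces of $\pa_1(\pa^k_3\,\ca_1)$ and $\pa_2(\pa^k_3\,\ca_2)$ vanish as well, whence $\pa^{k+1}_3\,\ca_3=0$ on $\Ga$. As $a_3=\ca_3$ in $\R^3_+$, this says precisely that the trace of $\pa^{k+1}_3\,a_3$ \emph{from above} vanishes.

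Finally I would promote this one-sided statement to the full assertion \eqref{bah}. Because $a=\cT\,\ca$ is the mirror-extension, $a_3$ is odd in $x_3$; as $k+1$ is even, $\pa^{k+1}_3\,a_3$ is again odd in $x_3$, so its trace from below is the negative of its trace from above, and both therefore vanish. Equivalently, $\pa^{k+1}_3\,a_3$ is an even-order normal derivative of $a_3$, so part b) of Proposition \ref{banilias} reduces ``fits'' to ``fits by zero'' directly. The only point demanding care is the bookkeeping in the first two steps: the quantity $\pa^{k+1}_3\,\ca_3$ must be accessed solely through the order-$k$ traces $\pa^k_3\,\ca_j$---which exist and vanish by assumption---differentiated tangentially, rather than through a genuine order-$(k+1)$ normal trace; the divergence-free identity is exactly what permits this reduction, so no real obstacle remains beyond this observation.
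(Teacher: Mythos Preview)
Your proof is correct and follows essentially the same route as the paper's: differentiate the divergence-free identity $k$ times in the normal direction to express $\pa^{k+1}_3 a_3$ via tangential derivatives of $\pa^k_3\,\ca_j$, then use the hypothesis \eqref{boh} to conclude. The only cosmetic difference is that the paper writes out the resulting expression for $-\pa^{k+1}_3 a_3$ on both sides of $\Ga$ simultaneously (picking up a factor $(-1)^{k+1}$ below), whereas you treat the upper side first and then invoke parity (equivalently, Proposition \ref{banilias}\,b)) for the lower side; this amounts to the same computation.
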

\begin{proof}
By appealing to the divergence free property we get
\begin{equation}
-\,\pa^{k+\,1}_3 \,a_3(x)= \left\{ \begin{array}{ll}%
\sum_{j=\,1}^{2} \, (\pa_j\,\pa^k_3\,\ca_j)(x)& \textrm{if $x_3 >\,0$\,,}\\
(-1)^{k+\,1}\,\sum_{j=\,1}^{2} \, (\pa_j\,\pa^k_3\,\ca_j)(\ox)& \textrm{if $x_3 <\,0$\,.}%
\end{array} \right.%
\end{equation}
This leads to the thesis.
\end{proof}%
Note that the result also holds for even values of $k$. Proposition
\ref{tres}, together with results already established, prove Theorem
\ref{tefu}.
\section{Independence of the compatibility conditions.}
We already have shown that \eqref{boh} and \eqref{bah} are necessary
conditions for fitting. Proposition \ref{tres} shows that these
conditions are not independent. We wonder whether the subset in
assumption \ref{sumo} is minimal. We show here that this is the
case, by constructing a divergence free vector field $\,v \in
\,C^{\infty}(\R^3)\,$, with compact support contained in a sphere
centered in the origin, and with radius arbitrarily small, which
satisfies the boundary conditions and all the compatibility
conditions up to an arbitrary odd order $\,n-\,1\,$, but which do
not satisfy the compatibility condition of order $\,n\,$. This shows
not only that the last compatibility condition is needed (this was
already known), but also that it does not follow from the set of all
the previous (lower order) compatibility conditions, together with
the boundary
conditions and the divergence free property.\par%
Let $\,n \geq \,2\,$ be an integer, fix a scalar field $\rho\in
C_0^{\infty}(B(0,1))\,,$ and define the vector field
$\,w=\,x^{n+\,1}_3 \,(1,\,1,\,0)\,$ in $\,\R^3\,$. It is easy to
check that the divergence free vector field $\,v=\,\na \times
(\rho\,w) \,$ vanishes on the boundary, together with any partial
derivative of order less or equal to $\,n-\,1\,$. In particular,
$\,v\,$ satisfies our boundary conditions, and
\begin{equation}
\pa^k_3\,v_1(x_1,\,x_2,\,0)=\,\pa^k_3\,v_2(x_1,\,x_2,\,0)=\,0\,,%
\label{numas-2}
\end{equation}
on $\Ga\,$, for each $\,k<\,n\,$. However%
\begin{equation}
\pa^n_3\,v(x_1,\,x_2,\,0)=\,\rho\,(n+1)!\,(-1,\,1,\,0)\,,%
\label{nimes}
\end{equation}
shows that $\pa^n_3\,v_1$ and $\pa^n_3\,v_2$ do not vanish on $\Ga$.
Consider an odd value $n \geq \,3$. Then, by \eqref{numas-2}, the
compatibility condition \eqref{numas} is satisfied up to order
$\,n-\,1$. In particular it holds for all odd $k\,,$ up to order
$n-\,2$ included. But the last compatibility condition does not
hold, as follows from \eqref{nimes}.
\section{On a class of solutions to the Navier-Stokes equations in $\R^3\,$.
The ``abstract" theorem \ref{abstract}.}%
Thanks to Proposition \ref{ttea} and Theorem \ref{tefu} we may
extend to boundary value problems many properties which hold for the
Cauchy problem. The same arguments hold for stationary problems,
where a force field $\wt f$ appears in place of $\ca$. Actually, if
we want to extend properties from a problem $P=P\,(\,\R^3)$, in the
whole space, to the corresponding slip boundary value problem $\wt
P=\wt P\,(\,\R^3_+)$, in the half-space, we merely have to check
that: a) $\ca\,$ (respectively $\wt f$) satisfies the assumptions
\ref{numas}, and the boundary conditions \eqref{bcns}; b) if $u$ is
a solution of problem $P\,(\,\R^3)\,$ then $\gT\, u$ is solution of
the
same problem; c) the solution $\,u\,$ is unique (actually, this point may be bypassed).\par%
The Theorem \ref{abstract} below follows the above picture. This
theorem is the foundation of many possible applications to the
Navier-Stokes equations, in particular that (inviscid limit) considered by us in the next section.\par%
Note that the preliminary hypotheses below consist in assuming that a very basic result holds in $\R^3\,.$\par%
We suppose that the functional space $\,X^l(\R^3)$ satisfies the
assumption \ref{existlo}.
\begin{theorem}
Preliminary $\,\R^3\,-$hypotheses: for each $\,a \in \,X^l(\R^3)\,$
the Cauchy problem \eqref{1;4} admits a unique solution $\,u\,\in
C(\,[0,\,T];
\,X^l(\R^3)\,)\,,$ for some positive $T=\,T(a)$.\par%
The $\,\R^3_+\,$ result: Assume that the initial data $\,\ca \in
X^l(\R^3_+)\,$ satisfy the boundary conditions \eqref{bcns} and the
compatibility conditions described in assumption \ref{sumo}. Then
the initial-boundary value problem \eqref{1;4}, \eqref{bcns} admits
a (unique) solution $\,\ccu\,\in C(\,[0,\,T]; \,X^l(\R^3_+)\,)\,$ in
$\,[0,\,T]\,$.\par%
More precisely, the solution $\,\ccu\,$ is constructed as follows.
Given $\,\ca \in X^l(\R^3_+)\,$ as above, we define $\,a=\,\cT \,\ca
\in X^l(\R^3)\,$ as being the mirror-extension of $\,\ca\,$ to
$\,\R^3\,$ (see \eqref{difas} below). Furthermore, let $\,u\,\in
C(\,[0,\,T]; \,X^l(\R^3)\,)\,$ be the solution to the Cauchy problem
with the initial data $\,a \,$. Then, for each $\,t \in [0,\,T]\,,$
the above solution $\,\ccu(t)\,$ is simply the restriction of
$\,u(t)\,$ to the half-space $\,\R^3_+\,$. In particular,
\begin{equation}
 c\,\|u(t)\|_{X(\R^3)} \leq \,\|\ccu(t)\|_{X(\R^3_+)}
 \leq\,\|u(t)\|_{X(\R^3)}\,,
\label{nomas}
\end{equation}
where $\,c=\,c(l)\,$, is a positive constant.
\label{abstract}%
\end{theorem}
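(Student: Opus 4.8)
The plan is to obtain $\ccu$ by solving the Cauchy problem for the symmetrically extended datum and then restricting, so that essentially every step has been prepared in the earlier sections; the role of the compatibility conditions \eqref{numas} is precisely to make that extension legitimate. First I would take $\ca \in X^l(\R^3_+)$ satisfying the slip boundary condition \eqref{bcns} and the conditions \eqref{numas}, and invoke Theorem \ref{tefu} to produce $a \in X^l(\R^3)$ with $\gT\,a=\,a$ and $a_{|\,\R^3_+}=\,\ca$. By the remark following \eqref{difas}, this $a$ is nothing but the mirror-extension $a=\,\cT\,\ca$, since the latter is the unique extension for which $a=\,\gT\,a$. At this point the preliminary $\R^3$-hypothesis applies to $a$ and yields a unique $u\in C(\,[0,\,T];\,X^l(\R^3)\,)$ solving \eqref{1;4} with initial datum $a$, on an interval $[0,\,T]$ with $T=\,T(a)$.

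Next I would transfer the symmetry of the datum to the whole trajectory. Since $a=\,\gT\,a$, the second assertion of Proposition \ref{ttea} (which uses uniqueness in $\R^3$) gives $u(t)=\,\gT\,u(t)$ for every $t\in[0,\,T]$. Consequently, by the third assertion of Proposition \ref{ttea} (equivalently, by Corollary \ref{quatro}), the restriction $\ccu(t):=\,u(t)_{|\,\R^3_+}$ satisfies the boundary condition \eqref{bcns} and solves the initial-boundary value problem in the half-space. Continuity in time, $\ccu\in C(\,[0,\,T];\,X^l(\R^3_+)\,)$, is then inherited from $u\in C(\,[0,\,T];\,X^l(\R^3)\,)$ through the boundedness of the restriction operator $X(\R^3)\to X(\R^3_+)$ assumed in the functional framework.

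For the estimate \eqref{nomas} I would use the symmetric decomposition of the norm. The upper bound $\|\ccu(t)\|_{X(\R^3_+)}\le\|u(t)\|_{X(\R^3)}$ is immediate, since the $X(\R^3_+)$ norm of a restriction never exceeds the $X(\R^3)$ norm of the function. For the lower bound, the identity $u(t)=\,\gT\,u(t)$ forces the restriction of $u(t)$ to $\R^3_-$ to equal the $\gT$-image of $\ccu(t)$; because $\gT$ is a reflection combined with a sign change on the third component, it is an isometry from $X(\R^3_+)$ onto $X(\R^3_-)$ (this is exactly what ``norms defined symmetrically'' means), so $\|u(t)_{|\,\R^3_-}\|_{X(\R^3_-)}=\|\ccu(t)\|_{X(\R^3_+)}$. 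The norm equivalence between $\|u(t)\|_{X(\R^3)}$ and the sum of the two restriction norms then gives $\|u(t)\|_{X(\R^3)}\le c^{-1}\|\ccu(t)\|_{X(\R^3_+)}$, with a constant $c=\,c(l)$ depending only on the equivalence constant of the space.

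Finally, for the (parenthetical) uniqueness in the half-space I would argue in reverse. Given any solution $\ccu$ of the initial-boundary value problem, I would mirror-extend it in space at each time, $\hw(t)=\,\cT\,\ccu(t)$, and check, via Proposition \ref{tteo}, that $\hw$ solves the Cauchy problem \eqref{1;4} with datum $a=\,\cT\,\ca$; uniqueness in $\R^3$ would then force $\hw=\,u$, hence $\ccu=\,u_{|\,\R^3_+}$. I expect this last point to be the only genuine obstacle, because it requires that $\ccu(t)$ keep satisfying the compatibility conditions \eqref{numas} for every $t$, so that $\cT\,\ccu(t)\in X^l(\R^3)$ and the extension is an admissible competitor for the whole-space problem; establishing this propagation in time (or, alternatively, verifying directly that $\hw$ is a distributional solution across $\Ga$ with no singular contribution supported on the boundary) is the delicate step. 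As already observed before the statement, this is also the point that ``may be bypassed'' if one is content with the explicit construction above.
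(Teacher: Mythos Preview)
Your proof is correct and follows exactly the paper's route: invoke Theorem~\ref{tefu} to pass from $\ca$ to the mirror-extension $a=\cT\,\ca\in X^l(\R^3)$ with $\gT\,a=a$, then apply Proposition~\ref{ttea} together with the $\R^3$-hypothesis to obtain $u$ and restrict. Your added arguments for the norm equivalence \eqref{nomas} and for the parenthetical half-space uniqueness (including the honest caveat about propagation of the compatibility conditions) go beyond what the paper spells out, but are sound and in the same spirit.
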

\begin{proof}
The Theorem \ref{abstract} follows immediately from the Theorem
\ref{tefu} together with Proposition \ref{ttea}. Indeed the
hypotheses on the data $\ca$ in Theorem \ref{abstract} are the same
as in Theorem \ref{tefu}. From this last theorem we get $a \in
\,X^l(\R^3)\,$ and $a=\,\gT\,a\,$. Therefore the assumptions on $a$
in Proposition \ref{ttea} are satisfied and we get that the
restriction of $u(t)\,$ to $\R^3_+$ solves the initial
boundary-value problem in the half space.
\end{proof}
The link between the solutions $\,\ccu\,$ and $\,u\,$ shows that
\eqref{nomas} holds, in general, for any other $\,X_1$-norm
satisfying the assumption \ref{existlo}, provided that $\,l_0(X_1)
\leq\,l_0(X)\,$.\par%
Concerning time instant $\,T(a)\,$, in typical situations there is a
lower bound for the values $\,T\,,$ which depends (decreasingly)
only on the $\,X^l-$norm of $\,a\,,$ and not on $\,a\,$ itself.
Often, a weaker norm is sufficient to determine $\,T\,$.\par%
Note that Theorem \ref{abstract} is not the more general result that
one can get. We have preferred to avoid the full generality, since
our interest is mainly concerned with the inviscid limit result in
strong topologies. This means that if the initial data is given in a
Banach space $X$, the convergence result should be established in
$C([0,\,T];\,X)\,$. Actually, one can deduce that the
initial-boundary value problem \eqref{1;4}, \eqref{bcns} has a
unique solution in some class (such as $L^p(\,0,\,T;
W^{l,\,q}(\R^3_+))\,$) if the Cauchy problem has a unique solution
in the corresponding class, provided that the initial data
(prescribed in $\R^3_+$) satisfies the boundary conditions and the
compatibility conditions given in assumption \ref{sumo}.
\section{The inviscid limit.}
Vanishing viscosity limit results in $3-D$ domains, without boundary
conditions, have been studied by many authors. See, for instance,
\cite{const-foias}, \cite{kat}, \cite{K3}, \cite{Kato-ponce-cpam},
\cite{lionsM}, \cite{SW}, and the more recent papers \cite{bvkaz},
\cite{Masm}. In \cite{bvkaz}, \cite{K3} and \cite{Masm} results are
proved in the \emph{strong topology}. For results concerning
inviscid limits in non-smooth situations we refer to \cite{cw-2}.\par%
Concerning the vanishing viscosity problem in bounded domains, under
slip boundary conditions, we refer to \cite{bvcr},\cite{bvcr2},
\cite{crispo}, \cite{iftimie},  \cite{xin} and references
therein.\par%
In the particular 2-D case the assumption $\,\om \times \,n=\,0\,$
on $\,\Ga\,$ is simply replaced by $\,\om =\,0\,.$  For specific 2-D
vanishing viscosity results under slip-type boundary conditions we
refer to the classical papers, \cite{bardos},  \cite{judo},
\cite{mcgrath}, \cite{secchi}. See also the more recent papers
\cite{clopeau},  \cite{lopes}, and \cite{watanabe}.

\vspace{0.2cm}

In the following we consider the Navier-Stokes equations
\begin{equation}
\left\{
\begin{array}{l}
\partial_t\,\ccu^\nu +\,(\ccu^\nu \cdot\,\nabla)\,\ccu^\nu -\,\nu\,\Delta\,\ccu^\nu
+\,\nabla\,\cp^\nu=\,0,\\
\nabla \cdot\,\ccu^\nu =\,0\,,\\
\ccu^\nu(0)=\,\ca_\nu(x)\,,
\end{array}
\right. \label{1;4bistil}
\end{equation}
in $\,\R^3_+\,$, under the boundary condition
\begin{equation}
\left\{
\begin{array}{l}
\ccu^\nu\cdot \n=\,0,\\
\\ \widetilde{\om}^\nu \times \,\n=\,0\,.
\end{array}
\right. \label{bcnsbis}
\end{equation}
We assume that the positive viscosities $\nu$ are bounded from above
by an arbitrary, but fixed, constant.\par%
As already remarked, on flat portions of the boundary, \eqref{bcns}
coincides with the well known boundary condition
\eqref{boundary-ho}. In the 3-D problem, if the boundary is not
flat, it is not clear how to prove \emph{strong} inviscid limit
results. In fact, a substantial obstacle appears. See \cite{bvcr}
for some details on this point.\par%
Denote by $\,X^l(\R^3_+)\,$ the initial data's space. We want to
prove the convergence in $\,C([0,\,T\,]; \,X^l(\R^3_+)\,)\,$ of the
solutions $\,\ccu^\nu\,$, as the viscosity $\nu$ goes to zero (and,
possibly, $\ca_\nu$ converging to some $\ca$) to the solution
$\ccu^0$ of the Euler equations in $\,\R^3_+\,$
\begin{equation}
\left\{
\begin{array}{l}
\partial_t\,\ccu^0 +\,(\ccu^0\cdot\,\nabla)\,\ccu^0+\,\nabla\,\cp^0=\,0,\\
\na \cdot\,\ccu^0=\,0\,,\\
\ccu^0(0)=\,\ca(x)
\end{array}
\right. \label{1;4e}
\end{equation}
under the zero-flux boundary condition
\begin{equation}
\ccu^0\cdot\,\n=\,0\,.%
\label{bceu}
\end{equation}

\vspace{0.2cm}

Previous results, particularly related to ours, were proved in
\cite{xin}, in spaces $\,W^{3,\,2}$; in \cite{bvcr}, in spaces
$\,W^{2,\,p}$ and $\,W^{3,\,p}$, for any arbitrarily large $p$; and
in reference \cite{bvcr2}, in arbitrary $W^{k,\,p}$ spaces. However,
uniform convergence in time with values in the initial data space
$\,X\,$ is not proved. On the contrary, for the Cauchy problem, some
sharp vanishing viscosity limit results  are known. Theorem
\ref{abstract} allows immediate extension of these results to the
initial-boundary value problem. The following assumption is, in
fact, a condition on $X^l(\R^3)$. It requires that the vanishing
viscosity limit result holds in $X^l(\R^3)$ for the Cauchy problem
\begin{equation}
\left\{
\begin{array}{l}
\partial_t\,u^\nu +\,(u^\nu \cdot\,\nabla)\,u^\nu -\,\nu\,\Delta\,u^\nu +\,\nabla\,p^\nu=\,0,\\
\na \cdot\,u^\nu =\,0\,,\\
u^\nu(0)=\,a_\nu(x)\,.
\end{array}
\right. \label{1;4bis}
\end{equation}
\begin{assumption}
a) For each $\,\nu >\,0\,$ and each $\,a^\nu \in X^l(\R^3)\,$ the
Cauchy problem \eqref{1;4bis} admits a unique solution $\,u^\nu
\,\in C(\,[0,\,T]; \,X^l(\R^3)\,)\,,$ where $T>\,0\,$ is independent
of $\nu\,$ and of the particular $\,a^\nu \in X^l(\R^3)$, provided
that their norms are bounded from above by a given constant.
Furthermore, if the parameter $\,\nu\,$ tends to zero and $\,a^\nu
\,$ tends to $\,a\,$ in $\,X^l(\R^3)\,)\,,$ then $\,u^\nu \,$
converges in $\,C(\,[0,\,T]; \,X^l(\R^3)\,$ to the
unique solution $\,u^0\,$ of the Euler equations in $\,\R^3\,$.\par%
\label{coxi}
\end{assumption}

\begin{theorem}
Under the assumption \ref{coxi} one has the following result:\par%
Let the vector fields $\,\ca^\nu \in X^l(\R^3_+)\,$ satisfy the
boundary conditions \eqref{bcnsbis}, and the compatibility
conditions described in the assumption \ref{sumo}. Then, for each
$\,\nu >\,0\,$, the initial-boundary value problem \eqref{1;4bis},
\eqref{bcnsbis} admits a unique solution $\,\ccu^\nu \in
\,C(\,[0,\,T]; \,X^l(\R^3_+)\,)\,$. Furthermore, if the parameter
$\,\nu \,$ tends to zero (vanishing viscosity limit) and the initial
data $\,\ca^\nu\,$ converge to some $\,a\,$ in $\, X^l(\R^3_+)\,$,
then $\,\ccu^\nu \,$ converges in $\,C(\,[0,\,T];
\,X^l(\R^3_+)\,)\,,$ to the unique solution $\,\ccu^0\,$ of the
Euler
equations \eqref{1;4e} under the boundary condition \eqref{bceu}.%
\label{teoappl}%
\end{theorem}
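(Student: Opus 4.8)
The plan is to reduce the entire statement, for each fixed $\nu$ and then in the limit $\nu\to0$, to the whole-space Assumption \ref{coxi} by means of the mirror-extension, following exactly the passage from Proposition \ref{ttea} to Theorem \ref{abstract}. First I would fix $\nu>0$ and apply Theorem \ref{tefu} to the datum $\ca^\nu$: since $\ca^\nu$ satisfies the slip boundary condition \eqref{bcnsbis} and the compatibility conditions of Assumption \ref{sumo}, the mirror-extension $a^\nu=\cT\,\ca^\nu$ lies in $X^l(\R^3)$ and satisfies $\gT\,a^\nu=a^\nu$ with $a^\nu_{|\R^3_+}=\ca^\nu$. Because $\ca^\nu$ converges, the norms $\|a^\nu\|_{X(\R^3)}$ are bounded, so Assumption \ref{coxi}(a) yields a unique solution $u^\nu\in C([0,T];X^l(\R^3))$ to the Cauchy problem \eqref{1;4bis} with datum $a^\nu$, on a time interval of length $T$ \emph{independent of} $\nu$. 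The $\gT$-equivariance of the Navier--Stokes system recorded in Proposition \ref{tteo}, together with uniqueness, forces $u^\nu=\gT\,u^\nu$; hence by Corollary \ref{quatro} and Proposition \ref{ttea} the restriction $\ccu^\nu:=u^\nu_{|\R^3_+}$ solves the initial-boundary value problem \eqref{1;4bistil}, \eqref{bcnsbis}. Its uniqueness follows as in Theorem \ref{abstract}: any half-space solution extends by $\gT$ to a whole-space solution, which must equal $u^\nu$ by the whole-space uniqueness.

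For the inviscid limit I would first check that the limiting datum is again admissible. The constraints defining admissible data, namely the slip condition \eqref{bcns} and the compatibility relations \eqref{numas}, are all of the form ``prescribed traces on $\Ga$ vanish or coincide''. Under Assumption \ref{existlo} the relevant trace maps are continuous on $X^l(\R^3_+)$, so these are closed linear constraints and the limit $a$ of $\ca^\nu$ satisfies them as well. Consequently $\cT\,a\in X^l(\R^3)$ with $\gT(\cT\,a)=\cT\,a$, and the boundedness of the extension operator (a consequence of the norm equivalence \eqref{nomas}) gives $a^\nu=\cT\,\ca^\nu\to\cT\,a$ in $X^l(\R^3)$.

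I would then invoke the second half of Assumption \ref{coxi}: since $a^\nu\to\cT\,a$ in $X^l(\R^3)$, the solutions $u^\nu$ converge in $C([0,T];X^l(\R^3))$ to the unique solution $u^0$ of the Euler equations in $\R^3$ with datum $\cT\,a$. The same equivariance argument, now applied to the Euler system whose convective, pressure, divergence and time-derivative terms all transform correctly under Proposition \ref{tteo}, gives $u^0=\gT\,u^0$, so by Lemma \ref{prip} and Corollary \ref{quatro} the restriction $\ccu^0:=u^0_{|\R^3_+}$ solves \eqref{1;4e} under the zero-flux condition \eqref{bceu}. Finally the upper bound in \eqref{nomas}, applied to the $\gT$-symmetric difference $u^\nu-u^0$, transfers the convergence: $\|\ccu^\nu(t)-\ccu^0(t)\|_{X(\R^3_+)}\le\|u^\nu(t)-u^0(t)\|_{X(\R^3)}$ uniformly in $t\in[0,T]$, whence $\ccu^\nu\to\ccu^0$ in $C([0,T];X^l(\R^3_+))$.

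The proof is in essence an assembly of the previously established facts, so no single step is genuinely difficult. The point that most deserves care is the commutation $\cT\,\ca^\nu\to\cT\,a$, that is, verifying that the limiting datum $a$ still belongs to the admissible class so that the mirror-extension operator is continuous across the limit; everything else reduces to a bounded-operator estimate through \eqref{nomas} and to the $\gT$-invariance of the equations provided by Proposition \ref{tteo}.
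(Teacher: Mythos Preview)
Your proof is correct and follows essentially the same route as the paper: mirror-extend the data via $\cT$, invoke Assumption \ref{coxi} in the whole space, and restrict back, with the norm equivalence \eqref{nomas} transferring both existence and convergence. You supply more detail than the paper does---in particular the verification that the limiting datum $a$ inherits the boundary and compatibility conditions (hence that $\cT$ is continuous across the limit), and the explicit use of the upper bound in \eqref{nomas} to pass the convergence from $\R^3$ to $\R^3_+$---but these are natural elaborations of the same argument rather than a different approach.
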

The result follows from Theorem \ref{abstract} together with the
assumption \ref{coxi}. We define $\,a^\nu=\,\cT \,\ca^\nu\,$ as
being the mirror-images of the $\,\ca^\nu$'s, and $\,u^\nu\,\in
C(\,[0,\,T]; \,X^l(\R^3)\,)\,$ as being the solutions to the Cauchy
problems \eqref{1;4bis} with viscosity $\,\nu\,$ and initial data
$\,a^\nu \,$. Then, the solutions $\,\ccu^{\nu}\,$ of the boundary
value problems are the restrictions to the half-space $\,\R^3_+\,$
of the solutions $\,u^\nu\,$, and  $\,\ccu^0\,$ is the restriction
to the half-space $\,\R^3_+\,$ of $\,u^0\,$.\par%

Additional regularity and convergence results for the solutions
$\,u^\nu\,$, their time-derivatives, and pressure follow immediately
from corresponding results proved for the Cauchy problem.%

\vspace{0.2cm}

To apply the above theorem to a specific problem, we simply replace
the assumption \ref{coxi} by the known, desired, vanishing viscosity
result for the Cauchy problem. For instance, let us show an
application of the above theorem, in the case
$\,X^l(\R^3)=\,\,H^{l,\,2}_\si(\R^3)\,$.
\begin{theorem}
Assume that the initial data $\,\ca_\nu \in
\,H^{l,\,2}_\si(\R^3_+)\,$ satisfy the boundary condition
\eqref{bcnsbis}. Further, if $\,l \geq\,4\,$, assume that for each
\emph{odd} integer $k \in \,[3,\, l-\,1\,]\,$, the compatibility
condition
\begin{equation}
\,\pa^k_3 \,\ca_j =\,0\,, \quad \textrm{on} \quad \Ga\,, \quad
\textrm{for} \quad \,j=\,1,\,2%
\label{sim}
\end{equation}
holds. Then the initial-boundary value problem \eqref{1;4bis},
\eqref{bcnsbis} admits a unique solution $\,\ccu^\nu \in \,
C(\,[0,\,T]; \,H^{l,\,2}_\si(\R^3_+))\,.$ Furthermore, if
$$
\ca_\nu \rightarrow \,\ca\,, \quad \textrm{in} \quad
H^{l,\,2}_\si(\R^3_+)\,,
$$
as $\,\nu \rightarrow \,0\,,$ then, as $\,\nu \rightarrow \,0\,,$
\begin{equation}
\ccu^\nu \rightarrow \,\ccu\,, \quad \textrm{in} \quad C(\,[0,\,T];
\,H^{l,\,2}_\si(\R^3_+))\,,%
\label{coh}
\end{equation}
where $\,\ccu\,$ is the solution to the Euler equations \eqref{1;4e}
under the boundary condition \eqref{bceu}.%
\label{teoka}
\end{theorem}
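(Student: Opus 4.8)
The plan is to recognize Theorem \ref{teoka} as nothing more than the specialization of Theorem \ref{teoappl} to the concrete choice $\,X^l(\R^3)=\,H^{l,\,2}_\si(\R^3)\,$. Accordingly the whole argument reduces to two verifications: that this space fits the functional framework of Assumption \ref{existlo} with the correct trace order, and that Assumption \ref{coxi} --- the sharp vanishing viscosity statement for the Cauchy problem --- actually holds in the $\,H^{l,\,2}_\si(\R^3)\,$ scale. Once these are in place, the conclusion is automatic.

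First I would check the framework. The Sobolev spaces $\,H^{l,\,2}_\si\,$ consist of divergence free fields, restrictions to $\,\R^3_\pm\,$ land in the symmetrically defined half-space spaces, and, as recorded in the introductory remarks, one has $\,l_0(H^{l,\,2})=\,l-\,1\,$: derivatives up to order $\,l-\,1\,$ possess traces on $\,\Ga\,$, and a field lies in $\,H^{l,\,2}(\R^3)\,$ precisely when its two restrictions agree, together with their homonymous derivatives, up to that order. Thus Assumption \ref{existlo} holds with $\,l_0=\,l-\,1\,$. Substituting $\,l_0=\,l-\,1\,$ into the compatibility conditions of Assumption \ref{sumo} turns the range of odd indices $\,k\in[3,\,l_0]\,$ into $\,k\in[3,\,l-\,1]\,$, which is exactly \eqref{sim}; and when $\,l\leq\,3\,$ one has $\,l_0\leq\,2\,$, so by Proposition \ref{menosba} no compatibility condition is needed, consistently with the restriction $\,l\geq\,4\,$ in the statement.

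The substantive ingredient, and the step I expect to carry the real weight, is the verification of Assumption \ref{coxi}: one must invoke a whole-space result guaranteeing that for each $\,\nu>\,0\,$ the Cauchy problem \eqref{1;4bis} with data $\,a_\nu\in H^{l,\,2}_\si(\R^3)\,$ has a unique solution in $\,C([0,\,T];\,H^{l,\,2}_\si(\R^3))\,$ on a time interval $\,T\,$ independent of $\,\nu\,$ (uniform on bounded sets of data), and that, as $\,\nu\to\,0\,$ with $\,a_\nu\to\,a\,$ in $\,H^{l,\,2}_\si\,$, the solutions converge in the strong norm $\,C([0,\,T];\,H^{l,\,2}_\si(\R^3))\,$ to the corresponding Euler solution. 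This is precisely the kind of sharp strong-topology inviscid-limit result available in the cited literature (for instance the uniform-in-time convergence in the initial-data norm established in \cite{bvkaz}, \cite{K3}, \cite{Masm}); it is here that the genuine analysis --- uniform $\,H^l\,$ energy estimates and the passage to the limit --- resides, and it is imported wholesale rather than reproved.

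With the framework confirmed and Assumption \ref{coxi} supplied by the cited whole-space theorem, the remaining argument is the mechanical one of Theorem \ref{teoappl}. Given data $\,\ca_\nu\,$ as in the hypotheses, Theorem \ref{tefu} produces the mirror-extensions $\,a_\nu=\,\cT\,\ca_\nu\in H^{l,\,2}_\si(\R^3)\,$ with $\,\gT\,a_\nu=\,a_\nu\,$; the whole-space solutions $\,u^\nu\,$ then satisfy $\,u^\nu=\,\gT\,u^\nu\,$ by Proposition \ref{ttea}, so their restrictions to $\,\R^3_+\,$ solve \eqref{1;4bis}--\eqref{bcnsbis} and constitute the required $\,\ccu^\nu\,$. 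The norm equivalence \eqref{nomas} transports both the uniqueness and the strong convergence $\,u^\nu\to\,u^0\,$ from $\,C([0,\,T];\,H^{l,\,2}_\si(\R^3))\,$ to $\,C([0,\,T];\,H^{l,\,2}_\si(\R^3_+))\,$, and restriction of $\,u^0\,$ yields the limit $\,\ccu\,$ solving the Euler problem \eqref{1;4e} under \eqref{bceu}. This completes the scheme.
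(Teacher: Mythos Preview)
Your proposal is correct and follows essentially the same route as the paper: identify $l_0(H^{l,2})=l-1$ so that Assumption~\ref{sumo} becomes exactly \eqref{sim}, verify Assumption~\ref{coxi} by invoking the whole-space inviscid-limit results of Kato and Masmoudi (the paper cites \cite{kat}, \cite{K3}, \cite{Masm}, and \cite{bvkaz}), and then read off the conclusion from Theorem~\ref{teoappl}. Your write-up is in fact somewhat more detailed than the paper's, which dispatches the argument in a few lines.
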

For instance, if $l=\,3$ one has $l_0=\,2\,$. Hence the vanishing
viscosity limit holds in the space $\,H^3_\si(\R^3_+)\,$
\emph{without} assuming compatibility conditions on the initial
data. If $\,X^5(\R^3_+)=\,H^5_\si(\R^3_+)\,$, one has $l_0=\,4\,$.
Hence we have to assume the compatibility
condition $\,\pa^3_3 \,\ca_j=\,0\,,$ for $j=\,1,\,2\,.$\par%
In Theorem \ref{teoka} the assumption \ref{coxi} holds, as follows
essentially from results due to T. Kato, see \cite{kat} and
\cite{K3}. A simpler proof is shown by N. Masmoudi, see Theorem 2.1
in reference \cite{Masm}. We may also appeal to \cite{bvkaz}, to
prove the above result in the cubic domain case.\par%

As another application, we may extend to the boundary value problem
the results proved by T.Kato and G. Ponce in \cite{Kato-ponce-cpam},
where convergence of Navier-Stokes to Euler follows in Lebesgue
spaces $\,L^p_s(\,\R^n)\,$. These spaces are similar to
$\,W^{s,\,p}\,$ spaces. See also \cite{Kato-ponce-duke}. It would be
redundant to state here other specific results. Checking this
possibility case by case is, on the whole, an easy task.
\begin{remark}
In this section we have appealed to the Theorem \ref{abstract} for
extending to the boundary value problem the properties described in
assumption \ref{coxi} for the Cauchy problem. If we want to extend
different properties, we merely have to replace the assumption
\ref{coxi} by an assumption describing the corresponding properties
for the Cauchy problem.
\end{remark}

\section{Regularity for shear-thickening stationary flows.}
In this section we consider the following stationary system
describing the motion of a non-Newtonian fluid:\be
\begin{cases}\vspace{1ex}
-\nabla \cdot S\,(\cD\, u) +\nabla \pi =\,f
\,,\\%
\nabla\,\cdot\, u=\,0\,.
\end{cases}
\label{NNF}\ee We assume that the ``extra stress'' $S$ is given by
\be\label{ess}S(\,\cD\, u)=\,(\,\nu_0+\nu_1|{\mathcal{D}}
u|^{p-2}\,)\,{\mathcal{D}} u\,,\ee
 where ${\mathcal{D}} u$ is the symmetric
gradient of $u$, i.e.
$${\mathcal{D}}\,u=\,\frac 12 \left(\,\nabla\,u+\,\nabla\,u^T\,\right),$$
$\nu_0$, $\nu_1$  are positive constants, and $p>2$. Here, and in
the next section, we sacrifice a greater generality to emphasizing
the main ideas. Thus  \eqref{ess} and \eqref{exs2} (see below) are
just the canonical representative of a wider class of extra stress
tensors to which our proof applies.\par%
As done for the initial boundary value problem \eqref{1;4}, we draw
new results for the system \eqref{NNF} under the slip-boundary
conditions \eqref{bcns} from the corresponding known results for the
whole space. We are mainly interested in regularity results up to
the boundary. This problem has received various contributions in
recent years. The main open problem is to prove the
$L^2$-integrability, up to the boundary, of the second derivatives
of the solutions, in both the cases $p<2$ and $p>2$. In reference
\cite{bvlali} the half-space case $\R^3_+$ is considered, under slip
(and non-slip) boundary conditions, and $\,p>\,2\,$. The author
shows that the second ``tangential'' derivatives belong to
$L^2(\R^3_+)$, while the second ``normal'' derivatives belong to
some $L^l_{loc}(\overline{\R^3_+})$, for a suitable $l<2$. See
\cite{BKR2} and \cite{BKR} for recent, and more general, related
results (under the non-slip boundary condition), and for references.\par%
In the sequel the reflection technique enables us to improve the
regularity results, by overcoming the loss of regularity from the
tangential to the normal direction. See Theorem \ref{teoapplNN}
below.
\par Following the notation in \cite{bvlali}, we define
$\wt D^1(\R^3_+):=D^{1,2}(\R^3_+)$ as the completion of
$C_0^{\infty}(\overline {\R^3_+})$ with respect to the norm
$\|\nabla \,v\|\,,$ and set
$$
\wt V_2(\R^3_+)=\left\{\,v\in \, \wt D^{1}(\R^3_+):\,\nabla\cdot
v\,=\,0\,,\, v_3|_{x_3=0}\,=\,0\,\right\},
$$
endowed with the norm $\,\|\,\nabla \,v\,\|\,.$ We denote by $(\wt
V_2(\R^3_+))'\,$ the dual space of $\wt V_2(\R^3_+)$. Finally we set
$$\wt V(\R^3_+)=\left\{\,v\in \wt V_2(\R^3_+):\|\,\cD\, v\,\|_p<\infty\,\right\}\,,$$
endowed with the norm $\|\,\nabla \,v\,\|\,+\,\|\,\cD\, v\,\|_p\,$.
We use $$D^{1}(\R^3)\,,\ V_2(\R^3)\,,\  (V_2(\R^3))'\,, \
V(\R^3)\,,$$ for the corresponding spaces in $\R^3$\,.\par Assume
that $f \in (V_2)'\,$.
 We say that $u\,$ is a {\rm{weak solution}} of system
\eqref{NNF} if $u\in V$ satisfies \be\label{buf2} \frac
12\int_{\R^3}\left(\,\nu_0+\nu_1\,|{\mathcal{D}}
u|^{p-2}\,\right){\mathcal{D}} u\cdot {\mathcal{D}}\, v dx
=\,\int_{\R^3}\ f \cdot v \,dx\,, \ee for all $v \in \,V$. A
corresponding definition holds for the boundary value problem.\par%
We start by recalling the following result (for a sketch of the
proof, see below).
\begin{proposition}
For each  $\,f \in (V_2(\R^3))'\cap L^2(\R^3)\,$, the system
\eqref{NNF} in $\,\R^3\,$ admits a unique weak solution $\,u \,\in
V(\R^3)\,$. Furthermore, the derivatives
$\,D^2\,u\,$ belong to $L^2(\R^3)\,$.\par%
\label{coxistat}
\end{proposition}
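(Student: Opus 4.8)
The plan is to split the statement into existence and uniqueness of the weak solution and the global $L^2$-bound on the second derivatives, treating each by standard tools that simplify in the whole space. For existence and uniqueness I would argue variationally: the weak formulation \eqref{buf2} is the Euler--Lagrange equation of the strictly convex functional
$$J(v)=\int_{\R^3}\Big(\frac{\nu_0}{2}\,|\cD v|^2+\frac{\nu_1}{p}\,|\cD v|^p\Big)\,dx-\langle f,v\rangle,\qquad v\in V(\R^3).$$
Since $v$ is divergence free, Korn's identity gives $\|\na v\|_2^2=2\|\cD v\|_2^2$, so the quadratic term controls $\|\na v\|_2$; as $f\in (V_2)'$ one has $|\langle f,v\rangle|\le\|f\|_{(V_2)'}\|\na v\|_2$, and Young's inequality yields coercivity of $J$ on $V(\R^3)$ in the norm $\|\na v\|_2+\|\cD v\|_p$. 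Convexity of $t\mapsto t^2$ and $t\mapsto t^p$ makes $J$ weakly lower semicontinuous, so the direct method produces a minimizer, i.e. a weak solution. Strict convexity (equivalently, strict monotonicity of $M\mapsto S(M)$) gives uniqueness; the Browder--Minty theorem applies just as well, the operator being monotone, coercive and hemicontinuous. The pressure is recovered a posteriori from the orthogonality of $f+\na\cdot S(\cD u)$ to divergence-free fields.

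For the regularity I would use the translation (difference-quotient) method, which is clean here since $\R^3$ has no boundary and differences in all three directions are admissible. Writing $\de_h^s w(x)=h^{-1}\big(w(x+he_s)-w(x)\big)$, I test \eqref{buf2} with $v=-\de_{-h}^s\de_h^s u$, again divergence free and admissible. Moving one difference quotient by the discrete integration-by-parts identity $\int g:\de_{-h}^s\psi=-\int \de_h^s g:\psi$, the left-hand side becomes $\tfrac12\int \de_h^s S(\cD u):\de_h^s\cD u$. The pointwise strong-monotonicity bound
$$\big(S(A)-S(B)\big):(A-B)=\nu_0|A-B|^2+\nu_1\big(|A|^{p-2}A-|B|^{p-2}B\big):(A-B)\ge\nu_0\,|A-B|^2,$$
valid since $M\mapsto|M|^{p-2}M$ is monotone, then delivers the lower bound $\tfrac{\nu_0}{2}\,\|\de_h^s\cD u\|_2^2$.

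For the right-hand side I would estimate $|\langle f,\de_{-h}^s\de_h^s u\rangle|\le\|f\|_2\,\|\de_{-h}^s(\de_h^s u)\|_2$ and, using that for each fixed $h$ the function $\de_h^s u$ lies in $H^1$, bound $\|\de_{-h}^s(\de_h^s u)\|_2\le\|\na \de_h^s u\|_2=\sqrt2\,\|\de_h^s\cD u\|_2$ by Korn's identity once more. Absorbing one power of $\|\de_h^s\cD u\|_2$ gives $\|\de_h^s\cD u\|_2\le (2\sqrt2/\nu_0)\,\|f\|_2$ uniformly in $h$ and in $s=1,2,3$, whence $\cD u\in H^1(\R^3)$; the second-order Korn inequality for divergence-free fields (transparent on the Fourier side, where $|\xi|^2|\hat u|^2=2|\widehat{\cD u}|^2$) upgrades this to $D^2u\in L^2(\R^3)$ with $\|D^2u\|_2\le C\|f\|_2$. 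The main obstacle is the bookkeeping in this last step: one must avoid circularity by exploiting that $\de_h^s u\in H^1$ for each fixed $h$ before passing to the limit, and must keep the nonlinear $\nu_1$-term controlled. Here it is simply discarded as a nonnegative quantity, which suffices for the $L^2$-claim; in a sharper statement (e.g. an estimate on $|\cD u|^{(p-2)/2}\cD u$) one would instead need to retain the full monotonicity inequality rather than only its $\nu_0$-part.
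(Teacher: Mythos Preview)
Your argument is correct and follows the same route as the paper: existence and uniqueness by monotone operator theory (your variational/Browder--Minty formulation is exactly this), and global $L^2$-regularity of $D^2 u$ by Nirenberg's translation (difference-quotient) technique applied in all three directions, which is precisely what the paper invokes. The paper merely sketches the argument and cites \cite{lions}, \cite{MNR}, and Lemma~4.1 of \cite{bvlali}, whereas you spell out the computation in full.
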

By the above proposition, and thanks to the procedure developed in
the previous sections, one has the following theorem.
\begin{theorem}
Let be $\,\wt f \in (\wt V_2(\R^3_+))'\cap L^2(\R^3_+)\,$. Then, the
boundary value problem \eqref{NNF}, \eqref{bcns} in $\R^3_+$ admits
a unique weak  solution $\,\ccu \in \,\wt V(\R^3_+)\,$. Furthermore,
the derivatives $\,D^2\,\ccu\,$ belong to $L^2(\R^3_+)\,$.%
\label{teoapplNN}%
\end{theorem}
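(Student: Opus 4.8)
The plan is to carry over the reflection argument of Theorem~\ref{abstract} to the stationary, nonlinear operator of \eqref{NNF}, reducing the half-space problem to the whole-space Proposition~\ref{coxistat}. First I would extend the datum, setting $f=\,\cT\,\wt f$, the mirror-extension of $\wt f$ to $\R^3$, so that $f=\,\gT\,f$ by construction. Since $\wt f\in L^2(\R^3_+)$ and norms split symmetrically under restriction, one checks that $f\in (V_2(\R^3))'\cap L^2(\R^3)$; this step involves no compatibility conditions, the force entering only through $L^2$-type pairings. Proposition~\ref{coxistat} then furnishes a unique $u\in V(\R^3)$, with $D^2u\in L^2(\R^3)$, solving the whole-space system with force $f$.

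The decisive point, the one \emph{not} covered by Proposition~\ref{tteo}, is that the operator $u\mapsto \na\cdot S(\cD\,u)$ commutes with $\gT$. I would verify this by a direct computation. Writing $\ep_1=\,\ep_2=\,1$ and $\ep_3=\,-1$, the chain rule for $\ox=\,(x_1,x_2,-x_3)$ gives $(\cD(\gT\,u))_{ij}(x)=\,\ep_i\,\ep_j\,(\cD\,u)_{ij}(\ox)$, whence the scalar $|\cD(\gT\,u)|(x)=\,|\cD\,u|(\ox)$ is \emph{reflection-invariant}. Consequently the nonlinear factor $\nu_0+\nu_1|\cD\,u|^{p-2}$ passes through unchanged and $S(\cD(\gT\,u))_{ij}(x)=\,\ep_i\,\ep_j\,S(\cD\,u)_{ij}(\ox)$; taking the divergence, the surviving reflection sign cancels against the $\ep_j$ coming from the chain rule, so that $\na\cdot S(\cD(\gT\,u))=\,\gT\,(\na\cdot S(\cD\,u))$. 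Combined with $\na(\gT\,\pi)=\,\gT\,(\na\,\pi)$ from Proposition~\ref{tteo}, this shows that $(\gT\,u,\gT\,\pi)$ solves \eqref{NNF} with force $\gT\,f=\,f$. By the uniqueness in Proposition~\ref{coxistat}, $u=\,\gT\,u$.

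With $u=\,\gT\,u$ in hand the existence conclusion follows the template of Proposition~\ref{ttea}. I would set $\ccu=\,u_{|\R^3_+}$; by Corollary~\ref{quatro} the symmetry $u=\,\gT\,u$ forces the slip conditions \eqref{bcns} on $\Ga$ (the traces of $\om=\,\na\times u$ being meaningful since $D^2u\in L^2$ yields $u\in H^2_{loc}$), so $\ccu$ is a weak solution of the boundary value problem. The symmetric splitting of norms gives $\ccu\in \wt V(\R^3_+)$ and $D^2\ccu=\,(D^2u)_{|\R^3_+}\in L^2(\R^3_+)$. Note that, in contrast with Theorem~\ref{teoka}, no compatibility conditions on the datum appear here: the solution space controls only second derivatives in $L^2$, so the relevant $l_0$ is $\le 1<\,3$ and Theorem~\ref{tefu} imposes nothing beyond $\ccu_3=\,0$ on $\Ga$.

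For uniqueness I would again pass to the whole space. Given any weak solution $\ccu\in \wt V(\R^3_+)$, the condition $\ccu_3=\,0$ on $\Ga$ makes its mirror-extension fit across $\Ga$, so $\cT\,\ccu\in V(\R^3)$. To see that $\cT\,\ccu$ solves the whole-space weak problem \eqref{buf2}, I would decompose an arbitrary test field $v\in V(\R^3)$ as $v=\,v^++\,v^-$ with $\gT\,v^\pm=\,\pm v^\pm$: against the even part $v^+$ both integrals in \eqref{buf2} reduce (factor $\frac12$ included) to the corresponding integrals over $\R^3_+$ and reproduce the boundary formulation satisfied by $\ccu$, while against the odd part $v^-$ both sides vanish, the integrands $S(\cD\,\cT\ccu)\cdot\cD\,v^-$ and $f\cdot v^-$ being odd under $x_3\mapsto -x_3$. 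Hence $\cT\,\ccu$ is a whole-space solution and Proposition~\ref{coxistat} forces $\cT\,\ccu=\,u$, i.e. $\ccu$ is unique. (Alternatively, strict monotonicity of $S$ for $p>\,2$ and $\nu_0>\,0$ gives uniqueness directly.) The main obstacle is precisely the nonlinear commutation of the second paragraph; the functional-analytic checks on the extended force and the norm equivalences are harmless. The saving feature is the reflection-invariance of $|\cD\,u|$, which lets the $p$-structure slip past the reflection and reduces everything to the tensorial transformation already implicit in the curl and Laplacian identities of Proposition~\ref{tteo}.
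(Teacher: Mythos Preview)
Your proof is correct and follows essentially the same route as the paper's: both reduce to checking that $\gT\,u$ solves \eqref{NNF} whenever $u$ does, via the transformation law $(\cD(\gT\,u))_{ij}(x)=\ep_i\ep_j(\cD\,u)_{ij}(\ox)$ (your $\ep_i\ep_j$ is exactly the paper's remark that the sign changes when the index $3$ appears an odd number of times), and both observe that the low regularity of the force removes any need for compatibility conditions of type \eqref{numas}. You supply more detail than the paper on the extension of $\wt f$, the divergence commutation, and especially the half-space uniqueness (which the paper leaves implicit), but the underlying strategy is identical.
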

\begin{proof}
Actually, we merely have to check that  if $u$ is a solution of
\eqref{NNF} in $\R^3\,$ so is  $\gT\,u$, where $\gT$ is given by
Definition \ref{ttrans}. This last property is immediate, since
$$ (\,\cD\, (\gT\,u))_{ij}(x)=\left\{\begin{array}{ll} \,(\,\cD
\,u)_{ij}(\overline x),\ \mbox{ as } i=\,j=\,3\  \mbox{ or } \
i,\,j\,\in \{1,\,2\}\,,\\ \\
 -\,(\,\cD \,u)_{ij}(\overline x),\ \mbox{
as } i=\,3\,,  \,j\,\in \{1,\,2\}\,.
\end{array} \right .
 $$
Note that change of sign occurs if the index $\,3\,$ appears an odd
number of times. It is worth noting that, due to the low regularity
of the force term, we do not have to require any extra assumption,
like assumption \ref{numas}.\par%
As for Proposition \ref{coxistat}, the existence and uniqueness of
weak solutions in $\R^3$ are well known, and derive from basic
theory of monotone operators. See \cite{lions}. As far as the $L^2$
regularity is concerned, we note that the results in \cite{MNR}
immediately show that the derivatives $D^2\, u\,$ belong to
$L^2_{loc}(\R^3)\,$. By appealing to our reflection technique
results, this yields $\,D^2\, \ccu\,\in
\,L^2_{loc}(\overline{\R^3_+})\,.$ Here the restriction ``local''
merely means ``at finite distance''. This restriction is not
substantial, since it is formally due to the fact that in \cite{MNR}
the authors consider a bounded domain. Clearly, one gets ``global''
regularity in $\,\R^3\,$ by appealing to the Nirenberg's translation
technique in all the space (as in \cite{bvlali}, Lemma 4.1). Note
that in \cite{bvlali} translations are allowed only in the
tangential directions, since the problem is considered in the
half-space. However, in $\,\R^3\,$, translations can be done in any
direction. Since we turn the problem in the half-space with
slip-boundary conditions into the problem in the whole space, the
solution $\,\ccu\,$ of the boundary value problem has second
derivatives in $L^2(\R^3_+)\,$, since it is the restriction to the
half-space of the solution $\,u\,$, with $\,D^2\,u\,\in
L^2(\R^3)\,$.
\end{proof}
\section{The evolution shear-thinning problem in the ``periodic cube''.}
As announced in the introduction, the technique followed for the
half-space applies for other domains with flat boundaries. Here we
show in which way one can extend the procedure to problems in the so
called ``periodic cube'', with slip boundary conditions on two
opposite faces which make up here the significant boundary
$\,\Ga\,$), and periodicity on the remaining two pairs of faces.
This is by now a canonical situation, that allows to avoid
localization techniques and unbounded domains, hence to focus
attention on the main (the boundary value) problem. Consider the
half-cube
\be\label{omegatilde}\wt\Om=(-1,1)^2\times\left(-\frac12,\frac12\right)\,.\ee
We assume the slip boundary conditions (recall \eqref{asstil}) in
$\,\Ga=\,\{\,x:\, |\,x_1\,|\leq\,1,\,|\,x_2\,| \leq\,1,
\,x_3=\pm\frac12\, \}\,,$ namely
\begin{equation}
a_3(x)=\pa_3\,\wt a_1(x)=\pa_3\,\wt a_2(x)=0\,,
\mbox{if}\
x_3=\pm\frac12\,,%
\label{ot}
\end{equation}
together with periodicity in the $\,x_1\,$ and $\,x_2\,$ directions,
where $\wt a\in X^l(\wt\Om)$. Notation apes that used in the
previous sections for the half-space case, with the obvious
adaptations.\par%
Now we perform a mirror-extension (recall \eqref{difas} and
\eqref{defas}), on the upper and lower faces, from each $\,\wt a\in
X^l(\wt\Om)$ to a corresponding $a$, defined in the cube
$\Om=(-1,1)^3$, in the following way
\be\label{dbmirr}\left\{\ba{ll}\vs1a(x)=\,\wt a(x)\,,
&\mbox{if}\ x\in\,{\wt\Om}\,,\\
\vs1 a_i(x)=\,\wt a_i(x_1,\,x_2,\,1-x_3)\,,&\mbox{if}
\ x_3\in\,\left[\frac12,1\,\right]\ \mbox{and}\ i=1,2\,,\\
\vs1 a_3(x)=\,-\,\wt a_3(x_1,\,x_2,\,1-x_3)\,,&\mbox{if}
\ x_3\in\,\left[\frac12,1\,\right]\,,\\
\vs1 a_i(x)=\,\wt a_i(x_1,\,x_2,\,-1-x_3)\,,&\mbox{if}
\ x_3\in\,\left[-1,-\frac12\,\right]\ \mbox{and}\ i=1,2\,,\\
a_3(x)=\,-\,\wt a_3(x_1,\,x_2,\,-1-x_3)\,,&\mbox{if}
\ x_3\in\,\left[-1,-\frac12\,\right]\,.\\
\ea\right.\ee%
By imposing the compatibility conditions (similar to \eqref{numas})
$$\pa_3^k\,\wt a_1\left(x_1,\,x_2,\,\pm\textstyle\frac12\right)=
\pa_3^k\,\wt a_2\left(x_1,\,x_2,\,\pm\textstyle\frac12\right)=0\,,\
\ \forall\, k\in\NN,\ k\ \mbox{odd},\ k \in [\,3,\,l_0\,]\,,$$ we
get $a\in X^l(\Om)\,$. A direct computation shows that, for any
admissible multi-index $\al$,
$$D^\al a(x_1,x_2,-1)=D^\al a(x_1,x_2,1).$$
By using this procedure, we have transformed the original problem
with mixed boundary conditions in a purely periodic one. Likewise in
the half-space case, if we start from a suitable solution to the
totally periodic problem, its restriction to the half-cube turns out
to be a solution of the slip-periodic problem. Clearly, regularity
properties are preserved. As an application we show here an
existence and regularity result in the framework of the
shear--thinning fluids. The problem considered is the evolution of a
non--Newtonian fluid in a periodic cube with slip boundary
conditions, namely \be\label{periodicevolution}\left\{\ba{ll}
\pa_t\, u-\nabla\cdot S\,(\,\cD\, u\,)\,+(\,u\,\cdot\nabla\,)\,u+\nabla\, p=0\,,\\
\nabla\,\cdot\, u=\,0\,,\\
u(0,x)=\,\wt a(x)\,, \ea\right.\ee%
where, for the sake of simplicity, we assume that the extra stress
$S$ is of the following type \be S(\cD u)=(\,1+\,|\,\cD
u\,|)^{p-2}\,\cD\, u\,.\label{exs2}\ee%
\par
We want to solve problem \eqref{periodicevolution} in the cube $\wt
\Om$  with boundary conditions \eqref{ot} and $p<\,2$. We require
vanishing mean value, as usual in the space periodic case. Let us
introduce the spaces
$$V_p=\left\{\,v\in W^{1,p}(\Om), \,
\nabla\cdot v=0,\ v\,\mbox{ is } x-\mbox{periodic}\right\}\,,$$
$$\wt{V}_p=\left\{v\in W^{1,p}(\wt\Om), \,
 \nabla\cdot v=0,\ v\,\mbox{ is } (x_1,x_2)-\mbox{periodic},
 \, v_3=0 \ \mbox{if}\ x_3=\pm\frac12\right\}\,.$$
We start by recalling the following result.
\begin{proposition}
Let be $p\in\left(\frac 75,2\right)$. For each $\,a \in
\,W^{2,2}(\Om)\cap V_p\,$ the initial value problem
\eqref{periodicevolution} in $\Omega$ admits a unique solution
 $\,u \,\in L^\infty(0,T;\,V_p\,)\cap L^2(0,T;\,W^{2,2}(\Om))\,,$
 $ \pa_t u\in L^2(0,T;\,L^2(\Omega)\,)$, for some positive $T=\,T(a)$.\par%
\label{asspercube}
\end{proposition}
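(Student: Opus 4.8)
The plan is to construct the solution by a Galerkin scheme and to derive, on the approximate level, the a priori bounds encoded in $L^\infty(0,T;V_p)$, $L^2(0,T;W^{2,2}(\Om))$ and $\pa_t u\in L^2(0,T;L^2(\Om))$, with the local time $T$ emerging from the nonlinear feedback in the highest estimate. First I would fix a basis $\{w_k\}$ of smooth, divergence-free, $x$-periodic vector fields with vanishing mean (for instance the trigonometric eigenfunctions of the periodic Stokes operator), project \eqref{periodicevolution} onto $\mathrm{span}\{w_1,\dots,w_N\}$ by means of the Leray projector $P$, and solve the resulting finite-dimensional ODE system for $u^N$; local solvability is immediate since $S$ is smooth and the nonlinearity is locally Lipschitz, and the blow-up alternative will be controlled by the estimates below.

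The estimates come in two layers. Testing the projected system with $u^N$ kills the convective and pressure terms (periodicity together with $\na\cdot u^N=0$) and yields $\|u^N(t)\|_{L^2}^2+\int_0^t\!\int_\Om S(\cD u^N):\cD u^N \le \|a\|_{L^2}^2$; since $S(\cD u):\cD u=(1+|\cD u|)^{p-2}|\cD u|^2$, this controls $u^N$ in $L^\infty(0,T;L^2)$ and $\cD u^N$ in $L^p$ for the large-gradient regime, giving a first weak bound in $L^p(0,T;V_p)$. The decisive layer is the higher estimate, where I would test simultaneously with $\pa_t u^N$ and with $-\De u^N$ (equivalently $Au^N$; on the torus $P$ and $\De$ commute, so no boundary terms arise). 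The stress contribution from the $\pa_t u^N$ test, namely $\int_\Om S(\cD u^N):\cD(\pa_t u^N)$, equals $\frac{d}{dt}\int_\Om\widehat\Phi(\cD u^N)$ for the convex potential $\widehat\Phi$ with $S=\widehat\Phi'$, while integrating $-\na\cdot S(\cD u^N)$ against $-\De u^N$ and using the ellipticity of the linearization produces the coercive quantity $\int_\Om (1+|\cD u^N|)^{p-2}|\na\cD u^N|^2$. Combined with a Korn inequality for periodic divergence-free fields (which upgrades $\na\cD u^N$ to the full Hessian) and with the equation read as $-\na\cdot S(\cD u^N)+\na p^N=-\pa_t u^N-(u^N\cdot\na)u^N$, this is what delivers $u^N\in L^2(0,T;W^{2,2})$ and $\pa_t u^N\in L^2(0,T;L^2)$ once the right-hand side is shown to lie in $L^2$.

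The main obstacle is controlling the convective contribution $\int_\Om (u^N\cdot\na)u^N\cdot(\pa_t u^N-\De u^N)$ against the \emph{degenerate} dissipation available for $p<2$, where the coercive weight $(1+|\cD u^N|)^{p-2}$ decays as the gradient grows. This is precisely where the hypothesis $p\in(\frac{7}{5},2)$ enters: by Sobolev embedding and interpolation in three dimensions one bounds $\|(u^N\cdot\na)u^N\|_{L^2}$ in terms of $\|u^N\|_{W^{2,2}}^{\theta}$ and lower norms, and the exponent $\theta$ is strictly less than one exactly when $p>\frac{7}{5}$, so that Young's inequality absorbs the highest-order factor into the coercive term at the cost of a superlinear polynomial in the energy. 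A differential inequality of the form $y'\le C(1+y)^\kappa$ for $y(t)=\|\na u^N(t)\|_{L^2}^2+\int_\Om\widehat\Phi(\cD u^N)$ then produces a bound on an interval $[0,T]$ with $T=T(\|a\|_{W^{2,2}})>0$ independent of $N$; note that the assumption $a\in W^{2,2}(\Om)\cap V_p$ is exactly what makes $y(0)$ finite, hence $T$ genuinely positive.

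The remaining steps are routine. With the uniform bounds in hand I would pass to the limit $N\to\infty$ via weak-$*$ convergence in $L^\infty(0,T;V_p)$ and weak convergence in $L^2(0,T;W^{2,2})$, together with the Aubin--Lions compactness lemma (using $\pa_t u^N\in L^2(0,T;L^2)$) to obtain strong convergence in $L^2(0,T;W^{1,q})$; the strong convergence identifies the nonlinear terms, and in particular lets me pass to the limit in $S(\cD u^N)$ through a.e.\ convergence of $\cD u^N$ and the continuity of $S$, recovering \eqref{periodicevolution} in the limit. Finally, uniqueness follows by testing the equation for the difference $u-v$ of two solutions with $u-v$ itself, discarding the dissipative difference by the monotonicity $(S(\cD u)-S(\cD v)):(\cD u-\cD v)\ge0$ and absorbing the convective difference by means of the $W^{2,2}$-regularity just established, which closes a Gronwall estimate in the $L^2$-norm.
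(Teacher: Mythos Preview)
Your sketch is sound, but you should know that the paper does not actually prove this proposition: immediately after the statement it simply invokes Theorem~17 of Diening--R\r u\v{z}i\v{c}ka \cite{diru} for existence and Corollary~18 and Theorem~19 of the same reference for uniqueness (with \cite{BersDR} mentioned for the degenerate case). So there is nothing to compare against in the paper itself. What you have outlined---Galerkin approximation with periodic Stokes eigenfunctions, the energy test with $u^N$, the higher-order test with $\pa_t u^N$ and $-\De u^N$, the differential inequality closing on a local interval via the threshold $p>\tfrac75$, Aubin--Lions compactness, and uniqueness from monotonicity of $S$---is precisely the strategy carried out in \cite{diru}, so in spirit your proposal and the paper point to the same argument.

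One technical point worth tightening in your plan: the coercive term you extract from the $-\De u^N$ test is the \emph{weighted} quantity $\int_\Om (1+|\cD u^N|)^{p-2}|\na\cD u^N|^2$, and for $p<2$ the weight degenerates at large gradients, so Korn alone does not immediately yield an unweighted $\|\na^2 u^N\|_{L^2}$ bound. In \cite{diru} this gap is closed by an interpolation argument that trades the weighted second-order dissipation against the available $L^\infty_t L^2_x$ control of $\na u^N$ (equivalently, by estimating $\|\na^2 u^N\|_{L^2}$ via H\"older between the weighted integral and a power of $\|\cD u^N\|$ in a higher Lebesgue norm obtained from Sobolev embedding); this is also where the exponent $\tfrac75$ genuinely enters. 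Your sentence ``with the equation read as \ldots'' gestures at this but does not quite capture the mechanism, so if you want a self-contained proof rather than a citation you should spell that step out.
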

The existence of a solution is ensured
 by Theorem 17 in \cite{diru}, while its uniqueness
  follows by Corollary 18 and Theorem 19 in the same reference.
  See also \cite{BersDR} for the degenerate case $S(\cD u)=\,|\,\cD
u\,|^{p-2}\,\cD\, u\,$. \vspace{0.2cm}
\par One has the following theorem.
\begin{theorem}
Let be $p\in\left(\frac 75,2\right)$. Let the vector field $\,\ca
\in W^{2,2}(\wt\Om)\cap \wt V_p\,$ satisfy the boundary conditions
\eqref{ot}. Then, the initial-boundary value problem
\eqref{periodicevolution}, \eqref{ot} in $\wt \Omega$ admits a
unique solution $\,\ccu \in L^\infty(0,T;\,\wt V_p\,)\cap
L^2(0,T;\,W^{2,2}(\wt\Om))$ and $\pa_t \ccu\in
L^2(0,T;\,L^2(\wt\Om))$.%
\label{teopev}
\end{theorem}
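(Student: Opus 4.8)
The plan is to carry over, to the periodic cube, the reduction already used for the half-space: the whole-cube result of Proposition \ref{asspercube} is transferred to the slip-periodic problem through the mirror extension \eqref{dbmirr}. Let $\gT$ denote the reflection $x_3 \mapsto 1 - x_3$ of the periodic cube $\Om = (-1,1)^3$, whose fixed set is exactly the pair of faces $x_3 = \pm\frac12$, acting on a vector field by $(\gT v)(x) = (v_1, v_2, -v_3)(x_1, x_2, 1 - x_3)$ and on a scalar by $(\gT q)(x) = q(x_1, x_2, 1 - x_3)$. By translation invariance of the operators involved, the commutation relations of Proposition \ref{tteo} hold for this $\gT$ as well. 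By construction the extension $a = \cT \ca$ of \eqref{dbmirr} satisfies $\gT a = a$, and, together with the $x_3$-periodicity encoded in $D^\al a(x_1, x_2, -1) = D^\al a(x_1, x_2, 1)$, it is fully $x$-periodic.

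First I would check that $a = \cT \ca$ lies in $W^{2,2}(\Om) \cap V_p$. Divergence-freeness and $(x_1, x_2)$-periodicity are inherited from $\ca$ through the reflection, and $x_3$-periodicity is the identity just recalled; the only remaining point is the fitting, across the faces $x_3 = \pm\frac12$, of the derivatives of $a$ up to order $l_0$. Since the strongest space here is $W^{2,2}$, one has $l_0 = 1 < 3$, so Assumption \ref{sumo} is vacuous and no compatibility condition of type \eqref{numas} is needed. The fitting of the zeroth- and first-order derivatives then follows from the slip conditions \eqref{ot} exactly as in Proposition \ref{banilias}: the tangential components $a_1, a_2$ and the odd normal derivatives of $a_3$ fit automatically, while $a_3$ and the normal derivatives $\pa_3 a_1, \pa_3 a_2$ fit by zero precisely because \eqref{ot} forces them to vanish on the faces. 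Hence $a \in W^{2,2}(\Om) \cap V_p$, and Proposition \ref{asspercube} yields a unique $u \in L^\infty(0,T; V_p) \cap L^2(0,T; W^{2,2}(\Om))$, with $\pa_t u \in L^2(0,T; L^2(\Om))$, solving \eqref{periodicevolution} in $\Om$.

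Next I would show that $\gT u$ solves the same whole-cube problem with initial datum $\gT a = a$. Applying $\gT$ termwise to \eqref{periodicevolution}, the time-derivative $\pa_t u$, the convective term $(u \cdot \na) u$, and the pressure gradient $\na p$ are handled directly by Proposition \ref{tteo}. The only new ingredient is the extra-stress term $\na \cdot S(\cD u)$, with $S$ as in \eqref{exs2}, and here I would reuse the computation in the proof of Theorem \ref{teoapplNN}: under $\gT$ one has $(\cD(\gT u))_{ij} = \pm (\cD u)_{ij}(x_1, x_2, 1 - x_3)$, with a sign change exactly when the index $3$ occurs an odd number of times, so that $|\cD(\gT u)| = |\cD u|(x_1, x_2, 1 - x_3)$ is reflection-invariant. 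Since $S(\cD u) = (1 + |\cD u|)^{p-2} \cD u$ depends on $\cD u$ only through this invariant scalar and linearly through $\cD u$, the tensor $S(\cD u)$ obeys the same sign rule as $\cD u$, whence $\na \cdot S(\cD(\gT u)) = \gT(\na \cdot S(\cD u))$. Thus $\gT u$ is a solution, and the uniqueness in Proposition \ref{asspercube} gives $u = \gT u$ for every $t \in [0,T]$.

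Finally, the symmetry $u = \gT u$ shows, via the cube analogues of Lemma \ref{prip} and Corollary \ref{quatro}, that $u(t)$ satisfies \eqref{ot} on $x_3 = \pm\frac12$, so the restriction $\ccu(t) = u(t)|_{\wt\Om}$ solves \eqref{periodicevolution}, \eqref{ot} in $\wt\Om$; the stated regularity of $\ccu$ and $\pa_t \ccu$ follows by restriction from that of $u$, using the equivalence between the norms over $\Om$ and over $\wt\Om$. For uniqueness in $\wt\Om$ I would run the extension backwards: any slip-periodic solution $\ccu$ has a mirror extension $\cT \ccu \in L^\infty(0,T; V_p) \cap L^2(0,T; W^{2,2}(\Om))$ which, by the same fitting (again with no compatibility condition) and the reflection-commutation above, solves the whole-cube problem, so whole-cube uniqueness forces $\cT \ccu$, hence $\ccu$, to be unique. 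The step I expect to require the most care is exactly this backward direction: verifying that the mirror extension of a slip-periodic solution is a genuine weak solution across the interior interfaces $x_3 = \pm\frac12$, i.e. that the fitting guaranteed by \eqref{ot} is precisely what cancels the interface contributions in the weak formulation of the extra-stress and convective terms. Once this is secured, the whole-cube uniqueness closes the argument.
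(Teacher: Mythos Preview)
Your proof is correct and follows the same route as the paper: extend $\ca$ via \eqref{dbmirr}, invoke Proposition \ref{asspercube} on the full cube, use the reflection-invariance of the system (for the extra stress, the same sign rule as in the proof of Theorem \ref{teoapplNN}) together with uniqueness to obtain $u=\gT u$, and restrict; the compatibility conditions are vacuous since $l_0(W^{2,2})=1<3$, which is exactly the paper's appeal to Proposition \ref{menosba}. Your explicit identification of $\gT$ as $x_3\mapsto 1-x_3$ on the $x_3$-periodic cube (with fixed set $x_3=\pm\tfrac12$), and your separate treatment of uniqueness in $\wt\Om$ by extending a slip-periodic solution back to $\Om$, are details that the paper leaves implicit but which are consistent with its argument.
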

The proof follows by extending the initial datum $\wt a$ by means of
equations  \eqref{dbmirr}, and by observing that if $u$ solves
system \eqref{periodicevolution} then $\gT\, u$ solves the same
system (see the previous section).
\par
We remark that in the present case the compatibility conditions (see
assumption \ref{sumo}) are not needed, as stated in Proposition
\ref{menosba}. \vskip 0.5cm \indent
 {\bf Acknowledgments}\,:
 The work of the second author was
supported by INdAM (Istituto Nazionale di Alta Matematica) through
a Post-Doc Research Fellowship.

\end{document}